\DeclareMathOperator{\Hom}{Hom}
\DeclareMathOperator{\Image}{Im}
\DeclareMathOperator{\cross}{cr}
\DeclareMathOperator{\Sym}{Sym}
\DeclareMathOperator{\module}{\textrm{-}\bf{mod}}
\DeclareMathOperator{\Tot}{Tot}
\DeclareMathOperator{\Kos}{Kos}
\DeclareMathOperator{\Tor}{Tor}
\DeclareMathOperator{\hTor}{\bf{Tor}}
\newtheorem{thm}{Theorem}[section]
\newtheorem{Lemma}[thm]{Lemma}
\newtheorem{prop}[thm]{Proposition}
\theoremstyle{definition}
\newtheorem{defn}[thm]{Definition}
\theoremstyle{remark}
\newtheorem{remark}[thm]{Remark}
\begin{document}

\title{On the Derived Functors of the Third Symmetric-Power Functor}
\author{Bernhard K\"ock and Ramesh Satkurunath}
\date{\today}

\maketitle

\begin{quote}
{\footnotesize {\bf Abstract.} We compute the derived functors of
the third symmetric-power functor and their cross-effects for
certain values. These calculations match predictions by the first
named author and largely prove them in general.

{\bf Mathematics Subject Classification 2000.} 13D25; 18G10; 18G30.}

\end{quote}

\section{Introduction}

Let \(R\) and \(S\) be rings. We recall that the construction of
the left derived functors \(L_k F:R\module\rightarrow S\module\)
of any covariant right-exact functor \(F:R\module\rightarrow
S\module\) is achieved by applying three functors. The first
functor constructs a projective resolution \(P.\) of the
\(R\)-module \(M\) that we wish to calculate the derived functor
of. Then the functor \(F\) is applied to the resolution \(P.\)
giving the chain complex \(F(P.)\).  Lastly the \(k^{th}\) derived
functor \(L_k F\) is defined to be \(H_k(F(P.))\), the \(k^{th}\)
homology of the chain complex \(F(P.)\). However for a given
module \(M\) the projective resolution of \(M\) is unique only up
to chain-homotopy equivalence, so this construction crucially
depends on the fact that \(F\) preserves chain-homotopies. In
general this fact does not hold when \(F\) is a nonlinear functor
such as the \(l^{th}\) symmetric-power functor, \(\Sym^{l}\), or
the \(l^{th}\) exterior-power functor, \(\Lambda^l\).  In the
paper \cite{DP} Dold and Puppe overcome this problem and define
the derived functors of non-linear functors by passing to the
category of simplicial complexes using the Dold-Kan
correspondence.

The Dold-Kan correspondence gives a pair of functors \(\Gamma\) and \(N\)
that provide an equivalence between the category of bounded chain
complexes and the category of simplicial complexes;
under this correspondence chain homotopies correspond to simplicial homotopies.
Furthermore in the simplicial world all functors preserve simplicial homotopy (not just linear functors).
Because of this the above definition of the derived functors of \(F\) becomes well defined
for any functor when \(F(P.)\) is replaced by the complex \(NF\Gamma(P.)\).

Now let $R$ be a Noetherian commutative ring,
let $I$ be an ideal in $R$
which is locally generated by a regular sequence of length $d$,
let $V$ be a finitely generated projective $R/I$-module and let
$P.(V)$ be an $R$-projective resolution of $V$.
In \cite{Ko1} the first named author explicitly calculates the modules
$H_k N\Sym^l\Gamma P.(V)$ when $d=1$ and $l$ is any positive integer,
and also when $d=2$ and $l=2$ (see Theorems 3.2 and 6.4 in \cite{Ko1}).
As explained in Section \ref{second cross section} such calculations lie at the heart of a new approach to the seminal
Adams-Riemann-Roch Theorem and hence to Grothendieck's Riemann-Roch theory.

In this paper we look at the case when $d=2$ and $l=3$.  Throughout this paper
we use the symbol $G_k$ to denote the derived functor
\[
\begin{array}{rll}
G_k:&\mathcal{P}_{R/I} &\rightarrow R\module \\
&V &\mapsto H_k N\Sym^3\Gamma P.(V)
\end{array}
\]
where \(\mathcal{P}_{R/I}\) denotes the category of finitely generated projective $R/I$-modules.
Let $L^3_1$ denote the Schur functor indexed with Young diagram of shape $(2,1)$
(see Definition \ref{Schur def}).
In Example 6.6 of \cite{Ko1} the first named author made the following prediction about
the functor~\(G_k\)
\[
G_k(V)\cong
\begin{cases}
\Sym^3(V) & k=0\\
L_1^3(V)\otimes I/I^2 & k=1\\
L^3_1(V)\otimes I/I^2 \otimes \Lambda^2(I/I^2) & k=3\\
D^3(V)\otimes \Lambda^2(I/I^2)^{\otimes 2} & k= 4\\
0 & k\ge5
\end{cases}
\]
and for the case when \(k=2\) he suggests that there exists an exact sequence:
\begin{align*}
0\rightarrow D^2(V)\otimes V \otimes \Lambda^2(I/I^2)
&\rightarrow H_2N\Sym^3\Gamma(P.(V))\\
&\rightarrow \Lambda^3(V)\otimes\Sym^2(I/I^2)
\rightarrow 0\text{.}
\end{align*}
Note that the prediction for $G_3(V)$ is given in slightly different terms
but, as will be explained after Proposition \ref{L^3_1},
this formulation is equivalent.

For each $k$ let $F_k$ stand for the prediction made for $G_k$ to be
(note that $F_2$ is defined only on objects, not on morphisms).
We will show that these predictions are true if $V$ is the free module $R/I$ of rank 1:
\[
G_k(R/I)\cong F_k(R/I),
\]
see Theorem \ref{first cross}.
Moreover we will show that the following isomorphisms hold for the higher cross-effects
(see Section \ref{cross-effect section} for the definition of cross-effect functors):
\begin{align*}
\cross_2(G_k)(R/I,R/I) &\cong \cross_2(F_k)(R/I,R/I) \\
\cross_3(G_k)(R/I,R/I,R/I) &\cong \cross_3(F_k)(R/I,R/I,R/I),
\end{align*}
see Proposition \ref{predictions} and Theorems \ref{second cross} and \ref{third cross}.

Theorem 1.5 of \cite{Ko1} implies that, if these isomorphisms
commute with certain structural maps, then the predictions are
true in general. To verify that the isomorphisms commute with the
maps as required, we would need to not only complete some quite
involved calculations, but we would also require a proper
definition of the functor $F_2$ i.e.\ one which applies to objects
and morphisms (rather than just to objects). Finding such a
suitable candidate for $F_2$ remains an open problem.

\section{Cross-effect Functors}\label{cross-effect section}

In \cite{EM} Eilenberg and Mac Lane introduced the theory of cross-effect functors.
The theory of cross-effect functors are central to the calculations of this paper.
In \cite{Ko1} the first named author proved a result (Theorem 1.5)
which shows that two functors are isomorphic if they agree
on certain data given by their cross-effect functors.
In this section we set the scene by introducing some of the theory of cross-effect functors from \cite{EM}.
Then we introduce the aforementioned theorem from \cite{Ko1} (see Theorem \ref{B's comparison theorem})
and exemplify it by showing that the Schur functor \(L^3_1\) and
the co-Schur functor \(\tilde{L}^3_1\) are isomorphic.
Furthermore we begin calculating the cross-effect functors of $G_k$ and calculate
$F_k(R/I), \cross_2(F_k)(R/I,R/I)$ and $\cross_3(F_k)(R/I,R/I,R/I).$

Let $\mathcal{P}$ be an additive category,
let $\mathcal{M}$ be an abelian category,
and let $F:\mathcal{P}\rightarrow\mathcal{M}$ be a functor with $F(0)=0$.

\begin{defn}
Let $k\ge0$. For any $V_1,\ldots,V_k\in\mathcal{P}$ and $1\le i\le k,$ let
$$P_i:V_1\oplus\ldots\oplus V_k\rightarrow V_i\rightarrow V_1\oplus\ldots\oplus V_k$$
denote the $i^{th}$ projection.  The $k$-functor
\[
\begin{array}{rrl}
\cross_k(F):&\mathcal{P}^k &\rightarrow \mathcal{M}\\
           &(V_1,\ldots,V_k) &\mapsto \Image\Big{(}\sum_{j=1}^k\sum_{1\le i_1\le \ldots \le i_j \le k}F(p_{i_1}+\ldots+p_{i_j})\Big{)}
\end{array}
\]
is called the \emph{$k^{\rm{th}}$ cross-effect of $F$};
here for any $V_1,\ldots,V_k,W_1,\ldots,W_k\in\mathcal{P}$
and $f_1\in\Hom_\mathcal{P}(V_1,W_1),\ldots,f_k\in\Hom_\mathcal{P}(V_k,W_k)$,
the map
$$
\cross_k(F)(f_1,\ldots,f_k):\cross_k(F)(V_1,\ldots,V_k)\rightarrow\cross_k(F)(W_1,\ldots,W_k)
$$
is induced by $f_1\oplus\ldots\oplus f_k\in
\Hom_\mathcal{P}(V_1\oplus\ldots\oplus V_k,W_1\oplus\ldots\oplus W_k).$
The functor $F$ is said to be \emph{of degree less than $k$} if
$\cross_k(F)$ is identically zero.
\end{defn}

The cross-effect functors $\cross_k(F), k\ge 0,$ have the following properties.
We obviously have $\cross_0(F)\equiv 0$ and $\cross_1(F)=F.$  Furthermore,
we have $\cross_k(F)(V_1,\ldots,V_k)=0$ if $V_i=0$ for any $i\in\{1,\ldots,k\}$
(see Theorem 9.2 on p.79 in \cite{EM}).
The most important property of cross-effects is given in the following proposition.
\begin{prop}\label{cross-effect decomp}
For any $k,l\ge1$ and $V_1,\ldots,V_l\in\mathcal{P},$ we have a canonical
isomorphism
\[
\cross_k(F)(\ldots,V_1\oplus\ldots\oplus V_l,\ldots)\cong
\bigoplus_{1\le j\le l} \bigoplus_{1\le i_1 < \ldots < i_j \le l}
\cross_{k+j-1}(\ldots,V_{i_1},\ldots,V_{i_j},\ldots)
\]
which is functorial in $V_1,\ldots,V_l.$ In particular,
$F$ is of degree $\le k,$ if and only if $\cross_k(F)$ is a $k$-additive functor.
\end{prop}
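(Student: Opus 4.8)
The plan is to realise every cross-effect occurring in the statement as the image of an explicit idempotent endomorphism, and then to deduce the decomposition from a combinatorial identity among these idempotents. Put $M:=V_1\oplus\dots\oplus V_l$ in the distinguished slot and let $\Sigma$ be the direct sum of $M$ with the $k-1$ remaining arguments, so that $\Sigma$ is a direct sum of $k+l-1$ objects indexed by a set $E$, of which a subset $J$ with $|J|=l$ indexes $V_1,\dots,V_l$. For $U\subseteq E$ let $p_U\colon\Sigma\to\Sigma$ be the projection onto the summands indexed by $U$. Here the only use of functoriality of $F$ (rather than additivity) is the identity $F(p_U)F(p_{U'})=F(p_Up_{U'})=F(p_{U\cap U'})$; thus the $F(p_U)$ form a commuting family of idempotent endomorphisms of $F(\Sigma)$, with $F(p_E)=F(\id_\Sigma)=\id$ and $F(p_\emptyset)=F(0)=0$.

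First I would install the Boolean Möbius calculus for this family. Setting $\theta_U:=\sum_{T\subseteq U}(-1)^{|U\setminus T|}F(p_T)$ for $U\subseteq E$, a short inclusion--exclusion together with Möbius inversion over the Boolean lattice shows that the $\theta_U$ are pairwise orthogonal idempotents, that $F(p_U)=\sum_{T\subseteq U}\theta_T$, and hence (the case $U=E$) that $\sum_{U\subseteq E}\theta_U=\id$. Since every projection appearing in $\theta_U$ lies below $p_U$ one has $\theta_U=F(p_U)\theta_U=\theta_U F(p_U)$, and applying $F$ to the split inclusion of $\bigoplus_{i\in U}(\cdot)$ into $\Sigma$ and to the complementary projection identifies $\Image(\theta_U)$ canonically with $\cross_{|U|}(F)\big((\cdot)_{i\in U}\big)$. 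In particular this yields the fundamental decomposition $F(\Sigma)\cong\bigoplus_{\emptyset\ne U\subseteq E}\cross_{|U|}(F)(\dots)$; and in the same way, for the coarse $k$-fold decomposition of $\Sigma$ that has $M$ as one summand, $\cross_k(F)(\dots,M,\dots)$ is the image of the top Möbius idempotent $\vartheta:=\sum_{R\subseteq\{1,\dots,k\}}(-1)^{k-|R|}F(\pi_R)$, where $\pi_R$ (the projection onto the $R$-indexed coarse summands) is $p_{\bar R}$ or $p_{\bar R\cup J}$ according as the distinguished coarse slot lies outside or inside $R$, with $\bar R\subseteq E\setminus J$ the set $R$ read off via the evident bijection between the non-distinguished coarse slots and $E\setminus J$. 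That $\cross_k(F)$, as defined above, really is the image of such a top Möbius idempotent is itself an inclusion--exclusion, which I would settle at the outset or take from \cite{EM}.

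The heart of the argument is the identity $\vartheta=\sum_{\emptyset\ne S\subseteq J}\theta_{(E\setminus J)\cup S}$. To prove it I would expand each $F(\pi_R)$ via $F(p_U)=\sum_{T\subseteq U}\theta_T$, collect the coefficient of each $\theta_T$, and evaluate the resulting alternating sums $\sum_R(-1)^{k-|R|}$ over the relevant subsets of $\{1,\dots,k\}$: the coefficient vanishes unless $T\supseteq E\setminus J$; for $T=E\setminus J$ the two contributions (distinguished slot inside versus outside $R$) cancel; and for $T\supsetneq E\setminus J$ exactly one survives, leaving precisely the asserted sum. Passing to images and using orthogonality of the $\theta$'s then gives
\[
\cross_k(F)(\dots,M,\dots)=\bigoplus_{\emptyset\ne S\subseteq J}\Image\big(\theta_{(E\setminus J)\cup S}\big)\cong\bigoplus_{\emptyset\ne S\subseteq J}\cross_{(k-1)+|S|}(F)\big(\dots,(V_j)_{j\in S},\dots\big),
\]
and reindexing $S$ by its cardinality $j$ and by $1\le i_1<\dots<i_j\le l$ is the assertion, since $(k-1)+j=k+j-1$. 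Functoriality in $V_1,\dots,V_l$ comes for free: a morphism $f_1\oplus\dots\oplus f_l$ commutes with every $p_U$, so $F$ of it intertwines every $\theta_U$ and respects the displayed decomposition. Finally, the ``in particular'' is the case $l=2$: the formula then reads $\cross_k(F)(\dots,A\oplus B,\dots)\cong\cross_k(F)(\dots,A,\dots)\oplus\cross_k(F)(\dots,B,\dots)\oplus\cross_{k+1}(F)(\dots,A,B,\dots)$ with the canonical map on the first two summands, so $\cross_k(F)$ is additive in each variable precisely when all such $\cross_{k+1}(F)(\dots,A,B,\dots)$ vanish; as cross-effects are symmetric in their arguments and every $(k+1)$-tuple is of this shape, that is equivalent to $\cross_{k+1}(F)\equiv0$, i.e.\ to $F$ having degree $\le k$ (and $\cross_k(F)$ automatically annihilates a zero argument, so the additivity is genuine).

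The step I expect to be genuinely delicate is not the combinatorics but the naturality: one must verify that the identification of $\Image(\theta_U)\subseteq F(\Sigma)$ with the a priori different subobject $\cross_{|U|}(F)(\dots)\subseteq F\big(\bigoplus_{i\in U}(\cdot)\big)$ is canonical and compatible with the structural inclusion and projection maps, since this is exactly what the ``canonical'' clause of the statement --- and the last step of the ``in particular'' --- requires. That compatibility reduces to intertwining relations of the form $F(g)F(p_U)=F(p_{U'})F(g)$ for the relevant morphisms $g$ of direct sums, which hold because such $g$ respect direct-sum decompositions; the care lies in organising these checks so that the whole web of isomorphisms is coherent.
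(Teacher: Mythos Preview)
Your argument is correct. The paper does not give a proof of this proposition at all: it simply cites Theorem~9.1 and Lemmas~9.8--9.9 of Eilenberg--Mac~Lane~\cite{EM}. What you have written is a clean, self-contained unpacking of exactly that reference --- the Boolean M\"obius calculus on the commuting idempotents $F(p_U)\in\End_{\mathcal{M}}(F(\Sigma))$, the identification of $\Image(\theta_U)$ with the corresponding cross-effect, and the key identity $\vartheta=\sum_{\emptyset\ne S\subseteq J}\theta_{(E\setminus J)\cup S}$ --- so your approach is not different from the paper's, it simply supplies the argument the paper outsources. The one point you flag yourself (that the paper's \emph{definition} of $\cross_k(F)$ as the image of a particular sum of $F(p_{i_1}+\dots+p_{i_j})$'s agrees with the image of the top M\"obius idempotent) is indeed the only place requiring care, and it is precisely what Lemma~9.8 of \cite{EM} establishes; your inclusion--exclusion sketch is the right way to prove it directly.
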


\begin{proof}
See Theorem 9.1, Lemma 9.8 and Lemma 9.9 in \cite{EM}.
\end{proof}

From Proposition \ref{cross-effect decomp} we see that
\[
F(V_1\oplus V_2)\cong F(V_1)\oplus F(V_2)\oplus\cross_2(F)(V_1,V_2)
\]
for all $V_1,V_2\in\mathcal{P}$; i.e.\ $\cross_2(F)$ measures the deviation
from linearity of the functor $F$.  This isomorphism can also be used to define
$\cross_2(F)$ (see section 3 in \cite{JM}.
Similarly, the higher cross-effects can be defined inductively by the isomorphism
\begin{align*}
\cross_k(F)(V_1,\ldots,V_{k-1},V_k\oplus V'_k) \cong\,\, &
\cross_k(F)(V_1,\ldots,V_{k-1},V_k)\\
&\oplus\cross_k(F)(V_1,\ldots,V_{k-1},V'_k)\\
&\oplus \cross_{k+1}(F)(V_1,\ldots,V_{k-1},V_k,V'_k)
\end{align*}
(see section 7 in \cite{JM}).
When actually calculating cross-effect functors
it is this definition that we will use.

\begin{defn}\label{crossplusdiag}
Let \(l\ge k\ge 1, V_1\), let \(\ldots,V_k\in\mathcal{P}\),
and let \(\epsilon=(\epsilon_1,\ldots,\epsilon_k)\in\{1,\ldots,l\}^k\) with
\(|\epsilon|:=\sum_{i=1}^k\epsilon_i=l\).
The composition
\begin{align*}
\Delta_\epsilon:\cross_k(F)(V_1,\ldots,V_k)
\xrightarrow{\cross_k(F)(\Delta,\ldots,\Delta)}
\cross_k&(F)(V_1^{\epsilon_1},\ldots,V_k^{\epsilon_k})\\
&\overset{\pi}\twoheadrightarrow
\cross_l(F)(V_1,\ldots,V_1,\ldots,V_k,\ldots,V_k)
\end{align*}
of the map \(\cross_k(F)(\Delta,\ldots,\Delta)\)
(induced by the diagonal maps \(\Delta:V_i\rightarrow V_i^{\epsilon_i}, i=1,\ldots,k\))
with the canonical projection \(\pi\)
(according to Proposition \ref{cross-effect decomp})
is called the
\emph{diagonal map associated with $\epsilon$}.
The analogous composition
\begin{align*}
+_{\epsilon}:\cross_l(F)(V_1,\ldots,V_1,\ldots,V_k,\ldots,V_k)
\hookrightarrow
\cross_k&(F)(V_1^{\epsilon_1},\ldots,V_k^{\epsilon_k})
\\
&\xrightarrow{{\cross_k(F)(+,\ldots,+)}}
\cross_k(F)(V_1,\ldots,V_k)
\end{align*}
is called \emph{plus map associated with $\epsilon$}.
\end{defn}

The maps \(\Delta_\epsilon\) and \(+_\epsilon\) form natural transformations
between the functors \(\cross_k(F)\) and
\(\cross_l(F)\circ(\Delta_{\epsilon_1},\ldots,\Delta_{\epsilon_k})\)
from \(\mathcal{P}^k\) to \(\mathcal{M}\).
One easily sees that the map \(\Delta_\epsilon\) can be decomposed
into a composition of maps \(\Delta_\delta\) with \(\delta\in\{1,2\}^j\)
such that \(|\delta|=j+1\) and \(j\in\{k,\ldots,l-1\}\).
The same holds for \(+_\epsilon\).

\begin{thm}\label{B's comparison theorem}
Let $A$ be a ring, $\mathcal{M}$ an abelian category, $d\in\mathbb{N_+}$,
and
\[
F,G:(\text{f.g. projective $A$-modules})\rightarrow\mathcal{M}
\]
be two functors of degree \(\le d\) with \(F(0)=0=G(0)\).
Suppose that there exist isomorphisms
\[
\alpha_i(A,\ldots,A):\cross_i(F)(A,\ldots,A)\tilde\longrightarrow\cross_i(G)(A,\ldots,A), \qquad i=1,\ldots,d,
\]
which are compatible with the action of $A$ in each component
and which make the following diagrams commute for \(i\in \{1,\ldots,d-1\}\)
and \(\epsilon\in\{1,2\}^i\) with \(|\epsilon|=i+1:\)
\[
\xymatrix
{
\cross_i(F)(A,\ldots,A)\ar[r]\ar[d]^{\Delta_\epsilon}   & \cross_i(G)(A,\ldots,A)\ar[d]^{\Delta_\epsilon}   \\
\cross_{i+1}(F)(A,\ldots,A)\ar[r]                                   & \cross_{i+1}(G)(A,\ldots,A)   \\
}
\]
\[
\xymatrix
{
\cross_{i+1}(F)(A,\ldots,A)\ar[r]\ar[d]^{+_\epsilon}    & \cross_{i+1}(G)(A,\ldots,A)\ar[d]^{+_\epsilon}    \\
\cross_{i}(F)(A,\ldots,A)\ar[r]                                         & \cross_{i}(G)(A,\ldots,A)\text{.} \\
}
\]
Then the two functors $F$ and $G$ are isomorphic.
\end{thm}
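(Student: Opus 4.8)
This result is Theorem~1.5 of \cite{Ko1}, and I would prove it along the following lines. The plan is to build the natural isomorphism first on the full subcategory $\mathcal{F}$ of finitely generated \emph{free} $A$-modules, using the fact that the morphism sets of $\mathcal{F}$ are generated, under composition and direct sum, by a very short list of ``elementary'' maps, and then to transport it to all finitely generated projective modules by splitting idempotents.

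First I would record the elementary fact that every $A$-linear map $A^{n}\to A^{m}$ is a composite of maps of the form $g_{1}\oplus\cdots\oplus g_{r}$ in which each $g_{j}$ is an identity of $A$, a module endomorphism of $A$, the diagonal $\Delta\colon A\to A^{2}$, or the fold $\nabla\colon A^{2}\to A$: given the matrix of the map one duplicates each source coordinate as often as needed, rescales, permutes, and sums, and a permutation matrix is itself such a composite. Iterating Proposition~\ref{cross-effect decomp} then yields a canonical decomposition
\[
F(A^{n})\;\cong\;\bigoplus_{\emptyset\neq S\subseteq\{1,\dots,n\},\ |S|\le d}\cross_{|S|}(F)(A,\dots,A)
\]
and likewise for $G$, the degree hypothesis being exactly what makes these sums finite and confines them to $i\le d$. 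I would then \emph{define} $\beta_{A^{n}}\colon F(A^{n})\to G(A^{n})$ to be $\bigoplus_{S}\alpha_{|S|}(A,\dots,A)$.

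The core of the proof is to check that $\beta$ is natural on $\mathcal{F}$; by the previous step it suffices to check that $\beta$ intertwines $F(g)$ with $G(g)$ for each elementary generator $g$ (possibly tensored with an identity), compatibility with composites and direct sums then being automatic from functoriality. Here the functoriality clause of Proposition~\ref{cross-effect decomp} shows that, with respect to the decompositions above, $F(m\oplus\id)$ (for $m$ a module endomorphism of $A$) acts on each summand through the $A$-module structure of the relevant component; that the matrix entries of $F(\Delta\oplus\id)$ are only identities, zeros, and diagonal maps $\Delta_{\epsilon}$; and that those of $F(\nabla\oplus\id)$ are only identities, zeros, and plus maps $+_{\epsilon}$. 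Every $\Delta_{\epsilon}$ and $+_{\epsilon}$ that arises decomposes into the short ones with $\epsilon\in\{1,2\}^{i}$ and $|\epsilon|=i+1$ (as noted after Definition~\ref{crossplusdiag}), and for each such entry the required compatibility of the $\alpha_{i}$'s is trivial, or is the hypothesis that the $\alpha_{i}$ are compatible with the $A$-action in each component, or is precisely one of the two commuting squares assumed in the statement. Thus $\beta$ is a natural transformation, and being a direct sum of the isomorphisms $\alpha_{i}$ it is an isomorphism; in particular it is automatically compatible with the symmetry isomorphisms of the cross-effects, since permutations already lie in the subcategory generated by the elementary maps.

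Finally I would extend $\beta$ from $\mathcal{F}$ to all finitely generated projective $A$-modules. For such a $V$ choose maps $\iota\colon V\hookrightarrow A^{n}$ and $p\colon A^{n}\twoheadrightarrow V$ with $p\iota=\id_{V}$; then $F(\iota)$ and $F(p)$ exhibit $F(V)$ as the image of the idempotent $F(\iota p)$ on $F(A^{n})$ (idempotents split in an abelian category), and one sets $\beta_{V}:=G(p)\circ\beta_{A^{n}}\circ F(\iota)$. A routine diagram chase using the naturality of $\beta$ on $\mathcal{F}$ shows that $\beta_{V}$ is independent of the chosen presentation, is natural in $V$, is an isomorphism, and induces the prescribed $\alpha_{i}$ on $i$-th cross-effects; this produces the desired isomorphism of functors $F\cong G$. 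I expect the one genuinely delicate point to be the verification in the third paragraph that applying $F$ to each elementary generator really does decompose, through the canonical cross-effect decomposition of $F(A^{n})$, into a matrix whose entries are only identities, zeros, scalar actions, and the short diagonal and plus maps --- so that the hypotheses capture neither too little nor anything superfluous --- together with keeping track of this through composites; the remaining steps are formal.
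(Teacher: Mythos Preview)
Your proposal is correct and matches the paper's approach exactly: the paper does not give an in-text proof but simply cites Theorem~1.5 of \cite{Ko1}, which you correctly identify at the outset. Your additional sketch---decomposing $F(A^n)$ via iterated cross-effects, checking naturality on a generating set of elementary morphisms (scalars, diagonals, folds) so that the hypothesised squares and the $A$-compatibility of the $\alpha_i$ suffice, and then extending to projectives by splitting idempotents---is a faithful outline of the argument in \cite{Ko1} and contains no gaps beyond the bookkeeping you flag in your final sentence.
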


\begin{proof}
See Theorem 1.5 of \cite{Ko1}.
\end{proof}

\begin{defn}\label{Schur def}
Let $A$ be a commutative ring.
For any finitely generated $A$-module $V$ we define $L_1^3(V)$
by the following exact sequences and we call $L^3_1$ the Schur functor
indexed by the Young diagram of $(2,1)$.
\[
\xymatrix
{
\Lambda^3(V)\ar[r] & \Lambda^2(V)\otimes V\ar[rr]\ar[dr] && V\otimes\Sym^2(V)\ar[r] & \Sym^3(V)\\
&&L^3_1(V)\ar[ur]\ar[dr]\\
&0\ar[ur]&&0\\
}
\]
And we define the co-Schur functor $\tilde{L}^3_1$ by the following exact sequence.
\[
\xymatrix
{
D^3(V)\ar[r] & D^2(V)\otimes V\ar[rr]\ar[dr] && V\otimes\Lambda^2(V)\ar[r] & \Lambda^3(V)\\
&&\tilde L^3_1(V)\ar[ur]\ar[dr]\\
&0\ar[ur]&&0\\}
\]
Here, the horizontal sequences are the well-known exact Koszul
and co-Koszul complexes, see Definition \ref{KdefofKos}.
\end{defn}

We now give an example of how Theorem \ref{B's comparison theorem} can be used,
by proving that the Schur functor \(L^3_1\) and the co-Schur functor \(\tilde{L}^3_1\) are isomorphic.

\begin{prop}\label{L^3_1}
\[\cross_k(L^3_1)(A,\ldots,A)\cong\cross_k(\tilde L^3_1)(A,\ldots,A)\cong
\begin{cases}
0           & k=1\\
A\oplus A   & k=2\\
A\oplus A   & k=3\\
0           & k\ge4\text{.}\\
\end{cases}\]
Moreover the two functors $L^3_1$ and $\tilde L^3_1$ are isomorphic.
\end{prop}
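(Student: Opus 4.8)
The plan is to read off the cross-effects from the polynomial-degree bound together with the splitting of Proposition~\ref{cross-effect decomp}, and then to feed the outcome into Theorem~\ref{B's comparison theorem}. As a first step I would record that $L^3_1$ and $\tilde L^3_1$ are of degree $\le 3$ and vanish at $0$. From the defining exact sequences, $L^3_1$ is a quotient of the functor $V\mapsto\Lambda^2(V)\otimes V$ and $\tilde L^3_1$ is a quotient of $V\mapsto D^2(V)\otimes V$, both of which are homogeneous polynomial functors of degree $3$. Since the decomposition in Proposition~\ref{cross-effect decomp} is natural, any surjection of functors $F\twoheadrightarrow F'$ induces surjections $\cross_k(F)\twoheadrightarrow\cross_k(F')$; hence $\cross_k(L^3_1)\equiv 0\equiv\cross_k(\tilde L^3_1)$ for $k\ge4$, while $L^3_1(0)=0=\tilde L^3_1(0)$ is immediate. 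This settles the case $k\ge4$.

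For $k\le3$ I would specialise Proposition~\ref{cross-effect decomp} to $V_1=\ldots=V_n=A$, giving
\[
F(A^n)\cong\bigoplus_{j=1}^{n}\binom{n}{j}\,\cross_j(F)(A,\ldots,A)\qquad(n=1,2,3),
\]
and compute the left-hand side for $F=L^3_1$ and $F=\tilde L^3_1$ from the defining sequences: since the Koszul and co-Koszul complexes are exact and built from free modules, $L^3_1(A^n)$ is the cokernel of the inclusion $\Lambda^3(A^n)\hookrightarrow\Lambda^2(A^n)\otimes A^n$, hence free of rank $n\binom{n}{2}-\binom{n}{3}=\tfrac13(n^3-n)$, and $\tilde L^3_1(A^n)$ is free of that same rank (alternatively, all values of Schur and co-Schur functors on free modules are free, with bases indexed by semistandard tableaux). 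Thus $F(A)=0$, $F(A^2)\cong A^2$ and $F(A^3)\cong A^8$ for both functors, and solving the displayed relation for $n=1,2,3$ in turn gives $\cross_1(F)(A)=0$, $\cross_2(F)(A,A)\cong A\oplus A$ and $\cross_3(F)(A,A,A)\cong A\oplus A$. This proves the numerical statement and in particular produces abstract isomorphisms $\cross_i(L^3_1)(A,\ldots,A)\cong\cross_i(\tilde L^3_1)(A,\ldots,A)$ for every $i$.

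For the isomorphism of functors I would apply Theorem~\ref{B's comparison theorem} with $d=3$, $F=L^3_1$, $G=\tilde L^3_1$; the hypotheses on degree and on the value at $0$ have just been verified, so it remains to choose isomorphisms $\alpha_1,\alpha_2,\alpha_3$ that are $A$-linear in each slot and make the $\Delta_\epsilon$- and $+_\epsilon$-squares commute for $i\in\{1,2\}$. Because $\cross_1\equiv 0$ the case $i=1$ is vacuous, so the whole content is carried by the four squares for $i=2$ and $\epsilon\in\{(1,2),(2,1)\}$, which relate $\cross_2$ to $\cross_3$. I would make these explicit: realise $\cross_2(L^3_1)(A,A)$ inside $A^2\otimes\Sym^2(A^2)$ (equivalently as $\Lambda^2(A^2)\otimes A^2$) and $\cross_3(L^3_1)(A,A,A)$ as the multilinear summand of $L^3_1(A^3)$, pick bases, and write out the matrices of $\Delta_{(1,2)}$, $\Delta_{(2,1)}$, $+_{(1,2)}$ and $+_{(2,1)}$; then repeat for $\tilde L^3_1$ using $D^2(A^2)\otimes A^2$ and $V\otimes\Lambda^2(V)$. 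Compatibility with the $A$-action forces $\alpha_2$ to be diagonal, with unit entries, for the bidegree splitting $\cross_2(F)(A,A)\cong A\oplus A$, and leaves $\alpha_3\in\mathrm{GL}_2(A)$ otherwise free, so the four commuting-square conditions reduce to an explicit finite linear system over $A$, which I expect to be solved by a simple choice such as $\alpha_2=\alpha_3=\id$, possibly after a sign adjustment. The main obstacle is precisely this last step, namely determining the four maps $\Delta_\epsilon$ and $+_\epsilon$ on both concrete models and exhibiting a coherent pair $\alpha_2,\alpha_3$; everything before it is routine bookkeeping with the exact Koszul complexes and the splitting formula.
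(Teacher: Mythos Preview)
Your rank computation of the cross-effects is correct, though it proceeds by a different route from the paper: rather than solving for ranks via the values $F(A^n)$, the paper computes $\cross_2$ directly as a \emph{bifunctor} by splitting the defining short exact sequence over $V\oplus W$, and obtains
\[
\cross_2(L^3_1)(V,W)\;\cong\;V^{\otimes 2}\otimes W\;\oplus\;V\otimes W^{\otimes 2}\;\cong\;\cross_2(\tilde L^3_1)(V,W).
\]

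The genuine gap in your proposal is the final step, which you yourself flag as the ``main obstacle'': you have not exhibited $\alpha_2,\alpha_3$ nor verified the four commuting squares, only sketched how one might try. (Incidentally, your claim that compatibility with the $A$-action forces $\alpha_2$ to be diagonal is not valid over every ring; over $\mathbb{F}_2$, for instance, the actions by $a$ and by $a^2$ coincide.) The paper bypasses this difficulty entirely, and this is the idea you are missing. Because the isomorphism $\cross_2(L^3_1)\cong\cross_2(\tilde L^3_1)$ is established at the level of bifunctors, and because the higher cross-effects together with the natural transformations $\Delta_\epsilon$ and $+_\epsilon$ are all built from $\cross_2$ via the recursive splitting in Proposition~\ref{cross-effect decomp}, the bifunctor isomorphism propagates automatically to isomorphisms of $\cross_k$ for every $k\ge 2$ that intertwine all the diagonal and plus maps. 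Since $\cross_1$ vanishes for both functors, the remaining squares involving $i=1$ are trivial, and Theorem~\ref{B's comparison theorem} applies immediately, with no explicit matrix computation needed. Computing $\cross_2$ functorially rather than only at $(A,A)$ is what turns the ``obstacle'' into a non-issue.
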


Note this justifies the reformulation of the predictions for $G_k$ as seen in the introduction,
as the original prediction for $G_3$ in \cite{Ko1} was written as
$G_3(V)\cong\tilde L_1^3(V) \otimes I/I^2 \otimes \Lambda^2(I/I^2)$.

\begin{proof}
Let $V$ and $W$ be finitely generated projective $A$-modules.
Definition \ref{Schur def} tells us the co-Schur functor $\tilde L_1^3(V)$
is given by the short exact sequence
 \(0\rightarrow \tilde L^3_1(V)\rightarrow V\otimes\Lambda^2(V)\rightarrow \Lambda^3(V)\rightarrow 0\).
In particular this tells us that \(\cross_1(\tilde L_1^3)(A)=0\), similarly
we see that \(L_1^3(A)=0\).

Next we want to compute \(\cross_2(\tilde L_1^3)(A,A)\) and \(\cross_2(L_1^3)(A,A)\).
We have the short exact sequence
\(0\rightarrow L^3_1(V\oplus W)\rightarrow (V\oplus W)\otimes\Lambda^2(V\oplus W)
\rightarrow \Lambda^3(V\oplus W)\rightarrow 0\).
Using the canonical decomposition for \(\Lambda^n\)
we get the following short exact sequence:
\begin{align*}
0\rightarrow &\cross_2(\tilde L^3_1)(V, W)
\rightarrow\\
&V\otimes(V\otimes W)\oplus V\otimes \Lambda^2(W)
\oplus W\otimes\Lambda^2(V)\oplus W\otimes(V\otimes W)\\
&\rightarrow \Lambda^2(V)\otimes W\oplus V\otimes\Lambda^2(W)\rightarrow 0\text{.}
\end{align*}
Therefore \(\cross_2(\tilde L^3_1)(V, W)\cong V^{\otimes 2}\otimes W\oplus  V\otimes W^{\otimes 2}\).
Similarly we find that \(\cross_2(L^3_1)(V, W)\cong V^{\otimes 2}\otimes W\oplus  V\otimes W^{\otimes 2}\),
so \(\cross_2(\tilde L^3_1)\) and \(\cross_2(L^3_1)\) are isomorphic as bi-functors.
Because of the way that higher cross-effects are calculated from lower cross-effects we see that,
for $k\ge 2$, the $k$-functors \(\cross_k(\tilde L^3_1)\) and \(\cross_k(L^3_1)\) will be isomorphic
and the corresponding diagrams of $+$ and \(\Delta\) maps will commute.
We easily verify that \(\cross_2(L^3_1)(A,A)\cong\cross_2(\tilde L^3_1)(A,A)\)
and \(\cross_3(L^3_1)(A,A,A)\cong\cross_3(\tilde L^3_1)(A,A,A)\) are free of rank 2 over $A$, as stated.

Since \(\cross_1(\tilde L^3_1)(A)\cong\cross_1(L^3_1)(A)\cong 0\)
the corresponding + and \(\Delta\) maps involving
\(\cross_1(\tilde L^3_1)(A)\) and \(\cross_1(L^3_1)(A)\) will just
be zero maps, and hence will commute with the isomorphism. So
Theorem \ref{B's comparison theorem} implies that \(\tilde L^3_1\)
is isomorphic to \(L^3_1\).
\end{proof}

The following lemma begins the work calculating the cross-effects of $G_k$
that we will complete in the following chapters.
Let $A$ and $A'$ be simplicial $R$-modules;
the reader should be aware that in the following lemma and its proof
that $A\otimes A'$ means the simplicial module whose $n^{\rm{th}}$
place is $A_n\otimes A'_n$.

\begin{Lemma}\label{firststep}
\begin{align*}
\cross_2(G_k)(V,W)\cong\,\,&
H_kN(\Sym^2\Gamma P.(V)\otimes \Gamma P.(W))\\
&\oplus H_kN(\Gamma P.(V)\otimes \Sym^2\Gamma P.(W))
\end{align*}
\[
\cross_3(G_k)(V,W,X)\cong
H_kN\big{(}\Gamma P.(V)\otimes \Gamma P.(W) \otimes \Gamma P.(X)\big{)}\text{.}
\]
\end{Lemma}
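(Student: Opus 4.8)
The plan is to read off $\cross_2(G_k)$ and $\cross_3(G_k)$ directly from the inductive description of the cross-effects recalled above, using the classical decomposition of a symmetric power of a direct sum. Since the Dold--Puppe derived functors do not depend on the chosen projective resolution, I may take $P.(V\oplus W)=P.(V)\oplus P.(W)$ and $P.(V\oplus W\oplus X)=P.(V)\oplus P.(W)\oplus P.(X)$; and since $\Gamma$ is additive (being one half of an equivalence of additive categories) we have $\Gamma P.(V\oplus W)=\Gamma P.(V)\oplus\Gamma P.(W)$ as simplicial $R$-modules, and likewise with three summands.

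Next I apply $\Sym^3$ degreewise. For $R$-modules $A,B$ there is the natural decomposition
\[
\Sym^3(A\oplus B)\cong\Sym^3(A)\oplus\bigl(\Sym^2(A)\otimes B\bigr)\oplus\bigl(A\otimes\Sym^2(B)\bigr)\oplus\Sym^3(B),
\]
and by naturality the simplicial face and degeneracy maps respect the corresponding degreewise splitting, so (writing $\otimes$ for the levelwise tensor product, as in the statement)
\begin{align*}
\Sym^3\Gamma P.(V\oplus W)\cong\ &\Sym^3\Gamma P.(V)\oplus\bigl(\Sym^2\Gamma P.(V)\otimes\Gamma P.(W)\bigr)\\
&\oplus\bigl(\Gamma P.(V)\otimes\Sym^2\Gamma P.(W)\bigr)\oplus\Sym^3\Gamma P.(W)
\end{align*}
as simplicial $R$-modules. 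As $N$ is additive and $H_k$ commutes with finite direct sums, applying $H_kN$ gives
\begin{align*}
G_k(V\oplus W)\cong\ &G_k(V)\oplus H_kN\bigl(\Sym^2\Gamma P.(V)\otimes\Gamma P.(W)\bigr)\\
&\oplus H_kN\bigl(\Gamma P.(V)\otimes\Sym^2\Gamma P.(W)\bigr)\oplus G_k(W).
\end{align*}
Comparing this with the isomorphism $G_k(V\oplus W)\cong G_k(V)\oplus G_k(W)\oplus\cross_2(G_k)(V,W)$ that defines $\cross_2(G_k)$ yields the first assertion.

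For $\cross_3(G_k)$ I iterate: substitute $W\oplus X$ for $W$ in the two-variable formula, use $\Gamma P.(W\oplus X)=\Gamma P.(W)\oplus\Gamma P.(X)$ and $\Sym^2(B\oplus C)\cong\Sym^2(B)\oplus(B\otimes C)\oplus\Sym^2(C)$, distribute $\otimes$ over $\oplus$ degreewise, and collect terms. After applying $H_kN$ one obtains
\begin{align*}
\cross_2(G_k)(V,W\oplus X)\cong\ &\cross_2(G_k)(V,W)\oplus\cross_2(G_k)(V,X)\\
&\oplus H_kN\bigl(\Gamma P.(V)\otimes\Gamma P.(W)\otimes\Gamma P.(X)\bigr),
\end{align*}
and comparison with the inductive description $\cross_2(G_k)(V,W\oplus X)\cong\cross_2(G_k)(V,W)\oplus\cross_2(G_k)(V,X)\oplus\cross_3(G_k)(V,W,X)$ gives the second assertion. (Equivalently, $\cross_3(G_k)(V,W,X)$ is the image of $H_kN$ applied to the multidegree-$(1,1,1)$ summand $\Gamma P.(V)\otimes\Gamma P.(W)\otimes\Gamma P.(X)$ of $\Sym^3\Gamma P.(V\oplus W\oplus X)$, which occurs with multiplicity one.) All of these isomorphisms are natural in $V,W,X$, hence are isomorphisms of functors.

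The one point needing care — and the step I would treat as the main obstacle — is to check that the canonical splitting of $\Sym^3$ of a direct sum is the splitting induced by the projection and inclusion maps of that direct sum, so that the summand isolated above is literally the cross-effect in the sense of the definition and not merely an abstractly isomorphic module. This is formal: the idempotent on $\Sym^3(A\oplus B)$ cutting out $\Sym^2(A)\otimes B\oplus A\otimes\Sym^2(B)$ is built from the operators $\Sym^3(p_{i_1}+\dots+p_{i_j})$ occurring in the definition of $\cross_k$, and once this identification is made degreewise and then transported through $\Gamma$, $N$ and $H_k$, everything else is bookkeeping with the distributivity of $\otimes$ over $\oplus$.
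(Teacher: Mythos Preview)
Your argument is correct and follows essentially the same route as the paper's proof: both compute $G_k(V\oplus W)$ via the additivity of $P.,\Gamma,N,H_k$ together with the decomposition $\Sym^3(A\oplus B)\cong\bigoplus_{l=0}^3\Sym^{3-l}(A)\otimes\Sym^l(B)$, read off $\cross_2(G_k)$, and then iterate with $W\oplus X$ (using $\Sym^2(B\oplus C)$) to extract $\cross_3(G_k)$. Your final paragraph checking that the canonical splitting really is the cross-effect idempotent is a welcome bit of extra care that the paper leaves implicit.
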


\begin{proof}
First we calculate \(G_k(V\oplus W)\) for $R$-modules $V,W$
to give us an expression for \(\cross_2(G_k)(V,W)\).
To do this we use the fact that \(P.,\Gamma, N\) and \(H_k\) are linear functors and also
the canonical decomposition
\(\Sym^3(V\oplus W)\cong \bigoplus_{l=0}^3\Sym^{3-l}(V)\otimes\Sym^l(W)\).
\begin{align*}
G_k(V\oplus W)=\,\,&H_kN\Sym^3\Gamma P.(V\oplus W)\\
\cong\,\, & H_k N\Big{(}\Sym^3\Gamma P.(V) \oplus \Sym^2\Gamma P.(V)\otimes \Gamma P.(W)\\
&\oplus  \Gamma P.(V)\otimes \Sym^2\Gamma P.(W) \oplus \Sym^3\Gamma P.(W)\Big{)}\\
\cong\,\, & G_k(V) \oplus H_kN\Big{(}\Sym^2\Gamma P.(V)\otimes \Gamma P.(W)\Big{)}\\
&\oplus H_kN\Big{(}\Gamma P.(V)\otimes \Sym^2\Gamma P.(W)\Big{)} \oplus  G_k(W)\text{.}
\end{align*}
And hence we get the desired expression for $\cross_2(G_k)(V,W)$.

We now complete the proof by using our expression for $\cross_2(G_k)(V,W),$
to get an expression for $\cross_3(G_k)(V,W,X)$
We start this by calculating \(\cross_2(G_k)(V, W \oplus X)\) for $R$-modules $V,W,X$.
To simplify our calculation we split up our expression for
\(\cross_2(G_k)(V,W\oplus X)\)
into
\(H_k N\big{(}\Sym^2\Gamma P.(V)\otimes \Gamma P.(W\oplus X)\big{)}\)
and
\(H_kN\big{(}\Gamma P.(V)\otimes \Sym^2\Gamma P.(W\oplus X)\big{)}\),
calculate each part separately, then add them together afterwards.
\begin{align*}
H_k N\Big{(}\Sym^2\Gamma P.(V)&\otimes \Gamma P.(W\oplus X)\Big{)}\\
\cong\,\,&
H_k N\Big{(}\Sym^2\Gamma P.(V)\otimes \Gamma P.(W)  \Big{)}\\
&\oplus H_k N\Big{(}\Sym^2\Gamma P.(V)\otimes \Gamma P.(X) \Big{)}\text{.}
\end{align*}
\begin{align*}
H_kN\Big{(}\Gamma P.(V)\otimes &\Sym^2\Gamma P.(W\oplus X)\Big{)}\\
&\cong
H_kN\Big{(}\Gamma P.(V)\otimes \Sym^2\Gamma P.(W))\Big{)}\\
&\oplus H_kN\Big{(}\Gamma P.(V)\otimes \big{(}\Gamma P.(W) \otimes \Gamma P.(X)\big{)}\Big{)}\\
&\oplus H_kN\Big{(}\Gamma P.(V)\otimes \Sym^2\Gamma P.(X)\Big{)}\text{.}
\end{align*}
This gives us the following expression for \(\cross_2(G_k)(V,W\oplus X)\):
\begin{align*}
\cross_2(G_k)(V,W\oplus X)\cong\,\,&
\cross_2(G_k)(V,W)\oplus\cross_2(G_k)(V,X)\\
&\oplus H_kN\Big{(}\Gamma P.(V)\otimes
\big{(}\Gamma P.(W) \otimes \Gamma P.(X)\big{)}\Big{)}\text{.}
\end{align*}
Therefore \(
\cross_3(G_k)(V,W,X)\cong
H_kN\big{(}\Gamma P.(V)\otimes \Gamma P.(W) \otimes \Gamma P.(X)\big{)}
\) as desired.
\end{proof}

We finally calculate the cross-effects of the predictions evaluated at the free $R/I$-module of rank 1.
\begin{prop}\label{predictions}
We have the following $R/I$-module isomorphisms:
\begin{align*}
F_k(R/I)
&\cong
\begin{cases}
R/I                                                 &   k=0\\
0                                                   &       k=1\\
\Lambda^2(I/I^2)                        &       k=2\\
0                                                       &       k=3\\
\Lambda^2(I/I^2)^{\otimes 2}&       k=4\\
\end{cases}
\\
\\
\cross_2(F_k)&(R/I,R/I) \\
&\cong
\begin{cases}
R/I \oplus R/I                                                          &   k=0\\
I/I^2 \oplus I/I^2                                                      &   k=1\\
\Lambda^2(I/I^2)\oplus\Lambda^2(I/I^2)\oplus\Lambda^2(I/I^2)\oplus\Lambda^2(I/I^2)  &   k=2\\
I/I^2 \otimes \Lambda^2(I/I^2) \oplus I/I^2 \otimes \Lambda^2(I/I^2)    &   k=3\\
\Lambda^2(I/I^2)^{\otimes 2} \oplus \Lambda^2(I/I^2)^{\otimes 2}        &   k=4\\
\end{cases}
\\
\\
\cross_3(F_k)&(R/I,R/I,R/I)\\
&\cong
\begin{cases}
R/I                                                                 &   k=0\\
I/I^2 \oplus I/I^2                                                  &   k=1\\
\Lambda^2(I/I^2) \oplus (I/I^2)^{\otimes 2} \oplus \Lambda^2(I/I^2) \oplus \Sym^2(I/I^2)                                        &   k=2\\
I/I^2 \otimes \Lambda^2(I/I^2) \oplus I/I^2 \otimes \Lambda^2(I/I^2)    &   k=3\\
\Lambda^2(I/I^2)^{\otimes 2}                                        &   k=4\text{.}\\
\end{cases}
\end{align*}
\end{prop}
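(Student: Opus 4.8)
The plan is to evaluate the predictions \(F_k\) and their first two cross-effects directly from the explicit descriptions in the introduction, using three elementary facts. (i) If \(T\) is an additive functor then \(\cross_j(T\circ H)\cong T\circ\cross_j(H)\) for every \(H\); in particular \(-\otimes_{R/I}M\) commutes with the formation of cross-effects for any fixed finitely generated projective \(R/I\)-module \(M\). (ii) By Proposition \ref{cross-effect decomp} the decomposition of \(F(V_1\oplus\dots\oplus V_j)\) into cross-effects is natural, so \(\cross_j\) is exact on short exact sequences of functors; and over \(R/I\) a short exact sequence with projective cokernel splits. (iii) If \(F\) is a homogeneous polynomial functor of degree \(d\) then \(\cross_j(F)(V_1,\dots,V_j)\) is the direct summand of \(F(V_1\oplus\dots\oplus V_j)\) made up of the multihomogeneous constituents that are nonzero in every variable; in particular \(\cross_j(F)=0\) for \(j>d\), and \(\cross_j(F)(A,\dots,A)\) can be read off from the canonical direct-sum decomposition of \(F\).

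For \(k\in\{0,1,3,4\}\) the functor \(F_k\) is, after tensoring with a fixed tensor power of \(I/I^2\) and/or of \(\Lambda^2(I/I^2)\), one of \(\Sym^3\), \(L_1^3\), \(D^3\), so by (i) the entire computation reduces to \(\cross_j\) of these three functors at \((A,\dots,A)\). For \(\Sym^3\) and \(D^3\) I would read these off from \(\Sym^3\bigl(\bigoplus_iV_i\bigr)\cong\bigoplus_{|n|=3}\bigotimes_i\Sym^{n_i}(V_i)\) and its analogue for \(D^3\): the constituents involving every summand give \(\cross_j\) free of rank \(1,2,1,0\) for \(j=1,2,3,\ge4\), and \(\Sym^3(R/I)\cong R/I\cong D^3(R/I)\). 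For \(L_1^3\) the needed values \(\cross_j(L_1^3)(A,\dots,A)\cong 0,\,A\oplus A,\,A\oplus A,\,0\) (\(j=1,2,3,\ge4\)) are exactly Proposition \ref{L^3_1}; in particular \(L_1^3(R/I)=0\), which is why \(F_1(R/I)=F_3(R/I)=0\). Tensoring the fixed modules back in gives all the entries of the proposition for \(k=0,1,3,4\).

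For \(k=2\) I would work from the defining exact sequence of \(F_2\) recalled in the introduction, \(0\to D^2(-)\otimes(-)\otimes\Lambda^2(I/I^2)\to F_2\to\Lambda^3(-)\otimes\Sym^2(I/I^2)\to0\), whose two outer terms are honest functors. This sequence is also what one uses to define \(F_2(R/I)\), \(\cross_2(F_2)(R/I,R/I)\) and \(\cross_3(F_2)(R/I,R/I,R/I)\): they are the middle terms of the exact sequences obtained by applying \(\cross_j\) (\(j=1,2,3\)) to the two outer functors and evaluating at \((R/I,\dots,R/I)\) with \(j\) entries. By (i) and (ii) the left term becomes \(\cross_j\bigl(D^2(-)\otimes(-)\bigr)(R/I,\dots,R/I)\otimes\Lambda^2(I/I^2)\) and the right term \(\cross_j(\Lambda^3)(R/I,\dots,R/I)\otimes\Sym^2(I/I^2)\), each computed via (iii) from the canonical decompositions of \(D^2(-)\otimes(-)\) and of \(\Lambda^3\), using \(\Lambda^2(R/I)=\Lambda^3(R/I)=0\) and \(D^2(R/I)\cong R/I\). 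One gets \(\cross_j(\Lambda^3)(R/I,\dots,R/I)=0\) for \(j=1,2\) and \(\cong R/I\) for \(j=3\); hence \(F_2(R/I)\cong\Lambda^2(I/I^2)\) and \(\cross_2(F_2)(R/I,R/I)\cong\Lambda^2(I/I^2)^{\oplus4}\), while for \(j=3\) the cokernel \(\Sym^2(I/I^2)\) is projective so the sequence splits, giving \(\cross_3(F_2)(R/I,R/I,R/I)\cong\Lambda^2(I/I^2)^{\oplus3}\oplus\Sym^2(I/I^2)\); using the natural exact sequence \(0\to\Lambda^2W\to W^{\otimes2}\to\Sym^2W\to0\) (which splits here) to rewrite \(\Lambda^2(I/I^2)\oplus\Sym^2(I/I^2)\cong(I/I^2)^{\otimes2}\) puts this in the form stated.

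The only genuine subtlety is the case \(k=2\): since \(F_2\) is defined on objects only, one must justify that \(\cross_j(F_2)(R/I,\dots,R/I)\) makes sense, and the point is precisely that the defining exact sequence has functorial outer terms and that, after evaluation at the free \(R/I\)-modules in question, the resulting extension is forced — it splits, the cokernel being projective. Beyond this the argument is a routine bookkeeping of which multihomogeneous constituents of \(\Sym^3\), \(D^3\), \(\Lambda^3\) and \(D^2(-)\otimes(-)\) survive at the free module, together with Proposition \ref{L^3_1}, and I do not anticipate any obstruction there.
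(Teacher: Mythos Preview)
Your approach is exactly what the paper's one-sentence proof sketches: invoke Proposition~\ref{L^3_1} for the $L^3_1$-pieces and read the remaining cases off from the canonical direct-sum decompositions of $\Sym^3$, $\Lambda^3$, $D^3$ and $D^2(-)\otimes(-)$, tensoring the fixed $I/I^2$-factors back in afterwards. Your handling of the $k=2$ case --- where $F_2$ is given only on objects --- via the induced (split) short exact sequences of projective $R/I$-modules is more explicit than the paper's, and is the natural way to make sense of ``$\cross_j(F_2)$'' here.

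One point does deserve comment. Your computation gives
\[
\cross_3(F_2)(R/I,R/I,R/I)\;\cong\;\Lambda^2(I/I^2)^{\oplus 3}\oplus\Sym^2(I/I^2),
\]
which after your rewriting becomes $\Lambda^2(I/I^2)^{\oplus 2}\oplus(I/I^2)^{\otimes 2}\cong\Lambda^2\bigl(I/I^2\oplus I/I^2\bigr)$, of local rank~$6$. The formula displayed in the proposition carries an additional $\oplus\,\Sym^2(I/I^2)$ summand (local rank~$9$), so your closing claim that the rewriting ``puts this in the form stated'' is not literally correct. However, since the paper asserts just before Theorem~\ref{third cross} that $\cross_3(G_k)$ and $\cross_3(F_k)$ agree, and Theorem~\ref{third cross} gives precisely $\Lambda^2(I/I^2\oplus I/I^2)$ for $k=2$, your rank-$6$ answer is the one consistent with the rest of the paper; the extra $\Sym^2(I/I^2)$ in the displayed statement appears to be a slip there rather than an error in your argument.
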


\begin{proof}
These results follow from a simple application of Proposition
\ref{L^3_1}, and the canonical decompositions for the symmetric-,
exterior-, and divided-power functors.
\end{proof}

\section{The Iterated Eilenberg-Zilber Theorem and Calculating \(\cross_3(G_k)(R/I,R/I,R/I)\)}

Let $A^1$ and $A^2$ be simplicial $R$-modules.
The Eilenberg-Zilber Theorem
(see \S 28 of \cite{May},
specifically Corollary 29.6 on p132)
tells us
that \(N\Delta(A^1\otimes A^2)\) is chain homotopic to \(\Tot(NA^1\otimes NA^2)\)
(where $\Delta(A^1\otimes A^2)$ denotes the diagonal of the bisimplicial complex
$A^1\otimes A^2$).
Let $C^1, C'^1, C^2, C'^2$ be chain complexes of $R$-modules.
If $C^1$ is chain homotopic to $C'^1$ and $C^2$ is chain homotopic to $C'^2$
then $\Tot(C^1\otimes C^2)\cong\Tot(C'^1\otimes C'^2)$; because of this the
Eilenberg-Zilber Theorem can be iterated to give us the following theorem.
\begin{thm} \emph{\bf{The Iterated Eilberg-Zilber Theorem.}}\label{iEZ}
Let \(n\in\mathbb{N}\) with \(n\ge 2\) and let \(A^1,\ldots, A^n\) be simplicial complexes.
Then the complexes \(N \Delta (A^1 \otimes \ldots \otimes A^n)\) and \(\Tot(NA^1\otimes \ldots \otimes NA^n)\)
are chain homotopic and (consequently) they are quasi-isomorphic:
\[H_k(N \Delta (A^1 \otimes \ldots \otimes A^n)) \cong H_k(\Tot(NA^1\otimes \ldots \otimes NA^n)).
\]
\end{thm}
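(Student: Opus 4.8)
The plan is to prove the Iterated Eilenberg--Zilber Theorem by induction on $n$, using the ordinary Eilenberg--Zilber Theorem (the case $n=2$, quoted from \cite{May}) as the base case together with the two stability properties of $\Tot$ and chain-homotopy that were recorded just before the statement: namely that $N\Delta(A^1\otimes A^2)$ is chain homotopic to $\Tot(NA^1\otimes NA^2)$, and that forming $\Tot(C\otimes -)$ sends a pair of chain-homotopy equivalences to a chain-homotopy equivalence. Since chain-homotopy equivalences induce isomorphisms on homology, the final displayed isomorphism will follow formally once the chain-homotopy equivalence of complexes is established.

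First I would set up the inductive step. Assume $n\ge 3$ and that the result holds for $n-1$. Write $B=\Delta(A^1\otimes\cdots\otimes A^{n-1})$ for the (iterated) diagonal of the first $n-1$ factors, so that $\Delta(A^1\otimes\cdots\otimes A^n)\cong\Delta(B\otimes A^n)$; one should check this identification of iterated diagonals of a multisimplicial object, which is a routine bookkeeping fact about the simplicial category. Applying the case $n=2$ to the pair $(B,A^n)$ gives a chain homotopy $N\Delta(B\otimes A^n)\simeq\Tot(NB\otimes NA^n)$. By the inductive hypothesis $NB=N\Delta(A^1\otimes\cdots\otimes A^{n-1})$ is chain homotopic to $\Tot(NA^1\otimes\cdots\otimes NA^{n-1})$, and $NA^n$ is trivially chain homotopic to itself; hence by the stability of $\Tot$ under chain-homotopy equivalence in each variable we get
\[
\Tot(NB\otimes NA^n)\simeq\Tot\bigl(\Tot(NA^1\otimes\cdots\otimes NA^{n-1})\otimes NA^n\bigr).
\]
Finally I would invoke the associativity of the total-complex construction, $\Tot(\Tot(C^1\otimes\cdots\otimes C^{n-1})\otimes C^n)\cong\Tot(C^1\otimes\cdots\otimes C^n)$, to identify the right-hand side with $\Tot(NA^1\otimes\cdots\otimes NA^n)$. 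Chaining these homotopies and isomorphisms yields the asserted chain homotopy, and passing to homology gives the displayed quasi-isomorphism.

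The main obstacle is not any single deep ingredient but the careful handling of the two compatibility facts that are being used as black boxes: that the iterated diagonal $\Delta(A^1\otimes\cdots\otimes A^n)$ of an $n$-fold multisimplicial module agrees (up to the natural identification) with the two-step diagonal $\Delta(\Delta(A^1\otimes\cdots\otimes A^{n-1})\otimes A^n)$, and that $\Tot$ of a tensor product of chain complexes is associative up to natural isomorphism and is functorial with respect to chain-homotopies in each slot. Both are standard, but one must be a little careful that the chain homotopies produced at each stage really do compose---i.e.\ that the Eilenberg--Zilber maps and the Alexander--Whitney maps used in \cite{May} are natural enough that tensoring a homotopy equivalence with a fixed complex and applying $\Tot$ preserves the homotopy-equivalence property. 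Once these functoriality statements are in hand, the induction is purely formal.
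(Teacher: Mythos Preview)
Your proposal is correct and follows essentially the same approach as the paper: the paper's entire argument (given in the paragraph preceding the theorem) is precisely to iterate the $n=2$ Eilenberg--Zilber theorem from \cite{May} using the fact that $\Tot$ of a tensor product preserves chain-homotopy equivalence in each variable. Your write-up simply makes explicit the bookkeeping (associativity of $\Tot$ and of the diagonal) that the paper leaves implicit.
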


The following theorem, together with the canonical decomposition
for exterior powers, shows that
the third cross-effect functor for
the derived functors of $\Sym^3$ evaluated on $(R/I,R/I,R/I)$
matches the predictions (see Proposition \ref{predictions}).
\begin{thm}\label{third cross}
\[\cross_3(G_k)(R/I,R/I,R/I)\cong\Lambda^k(I/I^2\oplus I/I^2)\]
\end{thm}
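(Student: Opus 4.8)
The plan is to reduce the statement, via Lemma~\ref{firststep} and the Iterated Eilenberg-Zilber Theorem, to the computation of an ordinary Tor-module, and then to compute that module with a Koszul resolution of $R/I$, using the hypothesis that $I$ is locally generated by a regular sequence of length~$2$. Concretely, I would first specialise Lemma~\ref{firststep} to $V=W=X=R/I$, obtaining $\cross_3(G_k)(R/I,R/I,R/I)\cong H_k N\big(\Gamma P.(R/I)\otimes\Gamma P.(R/I)\otimes\Gamma P.(R/I)\big)$, where the tensor products of simplicial modules are formed degreewise, so that this complex is $N\Delta$ of the triple tensor product. Theorem~\ref{iEZ} then identifies it with $H_k\Tot\big(N\Gamma P.(R/I)\otimes N\Gamma P.(R/I)\otimes N\Gamma P.(R/I)\big)$, and since $N\Gamma\cong\id$ under the Dold-Kan correspondence each factor $N\Gamma P.(R/I)$ may be replaced by $P.(R/I)$. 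Hence $\cross_3(G_k)(R/I,R/I,R/I)\cong H_k\Tot\big(P.(R/I)^{\otimes 3}\big)$; as $P.(R/I)$ is an $R$-projective resolution of $R/I$, the right-hand side is the triple Tor-module $\Tor^R_k(R/I,R/I,R/I)$, independent of the chosen resolution.

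Next I would compute $\Tor^R_k(R/I,R/I,R/I)$. Since Tor commutes with localisation, I may work locally, where $I=(f_1,f_2)$ for a regular sequence $f_1,f_2$, and take $P.(R/I)$ to be the Koszul complex $K=\Kos(f_1,f_2)$, so that $\Tor^R_k(R/I,R/I,R/I)=H_k(K\otimes_R K\otimes_R K)$. Because $K\otimes_R K$ is a bounded complex of free $R$-modules and the augmentation $K\to R/I$ is a quasi-isomorphism, I can replace the third factor to get a quasi-isomorphism $K\otimes_R K\otimes_R K\simeq K\otimes_R K\otimes_R R/I$; and since every differential of $K\otimes_R K$ has all of its matrix entries in $I$, the complex $K\otimes_R K\otimes_R R/I$ has zero differential. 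Its degree-$k$ part is $\bigoplus_{i+j=k}\Lambda^i\big((R/I)^2\big)\otimes_{R/I}\Lambda^j\big((R/I)^2\big)\cong\Lambda^k\big((R/I)^2\oplus(R/I)^2\big)$, and since the conormal module $I/I^2$ is locally free of rank~$2$ this is $\Lambda^k(I/I^2\oplus I/I^2)$ locally. It is worth recording why only two, not three, copies of the exterior algebra of $I/I^2$ occur: writing $R/I\otimes^{\mathbf{L}}_R R/I\otimes^{\mathbf{L}}_R R/I=\big(R/I\otimes^{\mathbf{L}}_R R/I\big)\otimes^{\mathbf{L}}_R R/I$, the first two factors already contribute the full Tor-algebra $\Lambda^\bullet(I/I^2)$, while the remaining derived tensor with $R/I$ adds only one further exterior factor.

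The remaining step, which I expect to be the only genuinely delicate one, is to upgrade this local computation to the asserted global isomorphism, the Koszul resolution being available only locally. The plan is to exhibit a natural $R/I$-linear map $\Lambda^k(I/I^2\oplus I/I^2)\to\cross_3(G_k)(R/I,R/I,R/I)$ assembled from canonical pieces --- for instance the canonical algebra homomorphism $\Lambda^\bullet_{R/I}(I/I^2)\to\bigoplus_j\Tor^R_j(R/I,R/I)$, which sends $I/I^2=\Tor^R_1(R/I,R/I)$, together with the edge maps of the Künneth spectral sequence for $\big(R/I\otimes^{\mathbf{L}}_R R/I\big)\otimes^{\mathbf{L}}_R R/I$ --- and then to observe that source and target are both finitely generated projective $R/I$-modules of rank $\binom{4}{k}$ and that, by the preceding paragraph, the map is an isomorphism after localisation at every prime; a map of finitely generated modules that is locally an isomorphism is an isomorphism. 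Alternatively, one may run the entire argument inside the globalised framework of \cite{Ko1}. The homological reductions of the first paragraph are formal, so essentially all the content lies in the local Koszul computation and in this naturality-and-descent argument.
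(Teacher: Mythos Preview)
Your reduction is exactly the paper's: Lemma~\ref{firststep} plus the Iterated Eilenberg--Zilber Theorem identify $\cross_3(G_k)(R/I,R/I,R/I)$ with $H_k\Tot\big(P.(R/I)^{\otimes 3}\big)$. The only divergence is in the final step. The paper does not redo the Tor computation or worry about globalisation; it simply invokes Theorem~5.1 of \cite{Ko1}, which already asserts the global isomorphism $H_k\big(P.(R/J)^{\otimes m}\big)\cong\Lambda^k\big((J/J^2)^{m-1}\big)$ for $J$ locally generated by a regular sequence (and is in fact cited for this purpose elsewhere in the paper, e.g.\ in the proof of Theorem~\ref{first cross}). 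Your local Koszul computation is correct and essentially reproves the special case $m=3$, $d=2$ of that theorem, but the naturality-and-descent discussion in your last paragraph is unnecessary once you are willing to quote \cite{Ko1}; the delicate part you flag has already been taken care of there.
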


\begin{proof}
From Lemma \ref{firststep} we know that
\[
\cross_3(G_k)(V,W,X)\cong
H_kN\big{(}\Gamma P.(V)\otimes \Gamma P.(W) \otimes \Gamma P.(X)\big{)}\text{.}
\]
Here \(\Gamma P.(V)\otimes \Gamma P.(W) \otimes \Gamma P.(X)\)
stands for the simplicial complex whose $k^{\rm{th}}$ place is
\(\Gamma P_k(V)\otimes \Gamma P_k(W) \otimes \Gamma P_k(X).\)
But we may consider this to be the diagonal of the tri-simplicial complex
whose $(k,l,m)^{\rm{th}}$ place is
\(\Gamma P_k(V)\otimes \Gamma P_l(W) \otimes \Gamma P_m(X),\)
and by doing so we write
\[
\cross_3(G_k)(V,W,X)\cong
H_kN\Delta\big{(}\Gamma P.(V)\otimes \Gamma P.(W) \otimes \Gamma P.(X)\big{)}\text{.}
\]

The Iterated Eilenberg-Zilber Theorem tells us that
\[
H_kN\Delta\big{(}\Gamma P.(R/I)\otimes \Gamma P.(R/I) \otimes \Gamma P.(R/I)\big{)}
\cong H_k\Tot\big{(}P.(R/I)\otimes P.(R/I)\otimes P.(R/I)\big{)}\text{.}
\]
Theorem 5.1 of \cite{Ko1} tells us that
\[
H_k\Tot\big{(}P.(R/I)\otimes P.(R/I)\otimes P.(R/I)\big{)} \cong \Lambda^k(I/I^2\oplus I/I^2)\text{,}
\]
as desired.
\end{proof}

\section{The Derived Functors of $\Sym^2$ and Calculating \(\cross_2(G_k)(R/I,R/I)\)}\label{second cross section}

The following theorem shows that
the second cross-effect functor of
the derived functors of $\Sym^3$ evaluated on $(R/I,R/I)$
matches the predictions (see Proposition \ref{predictions}).
Essential to the following proof is the information that has already been calculated
about $H_kN\Sym^2\Gamma P.$ in \cite{Ko1}, and the use of the Hypertor functor to exploit this.
\begin{thm}\label{second cross}
\[\cross_2(G_k)(R/I,R/I)
\cong
\begin{cases}
R/I\oplus R/I                                                                       & k=0\\
I/I^2\oplus I/I^2                                                                   & k=1\\
\Lambda^2(I/I^2)\oplus\Lambda^2(I/I^2)\oplus \Lambda^2(I/I^2)\oplus\Lambda^2(I/I^2) & k=2\\
I/I^2\otimes \Lambda^2(I/I^2)\oplus I/I^2\otimes \Lambda^2(I/I^2)                   & k=3\\
\Lambda^2(I/I^2)^{\otimes 2}\oplus \Lambda^2(I/I^2)^{\otimes 2}                     & k=4\\
0                                                                                   & k\ge5\text{.}\\
\end{cases}\]
\end{thm}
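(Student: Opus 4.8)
The plan is to reduce the computation of $\cross_2(G_k)(R/I,R/I)$ to known facts about the derived functors of $\Sym^2$. By Lemma \ref{firststep} we have
\[
\cross_2(G_k)(R/I,R/I)\cong
H_kN\big(\Sym^2\Gamma P.(R/I)\otimes \Gamma P.(R/I)\big)
\oplus
H_kN\big(\Gamma P.(R/I)\otimes \Sym^2\Gamma P.(R/I)\big),
\]
and the two summands are symmetric in role, so it suffices to compute one of them, say $H_kN(\Sym^2\Gamma P.(R/I)\otimes \Gamma P.(R/I))$, and then double the answer.

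First I would view $\Sym^2\Gamma P.(R/I)\otimes \Gamma P.(R/I)$ as the diagonal of a bisimplicial module whose $(k,l)^{\rm th}$ place is $\Sym^2\Gamma P_k(R/I)\otimes \Gamma P_l(R/I)$, and apply the Eilenberg--Zilber Theorem (Theorem \ref{iEZ} with $n=2$) to obtain
\[
H_kN\big(\Sym^2\Gamma P.(R/I)\otimes \Gamma P.(R/I)\big)\cong
H_k\Tot\big(N\Sym^2\Gamma P.(R/I)\otimes P.(R/I)\big).
\]
Since $P.(R/I)$ is an $R$-projective resolution of $R/I$, the right-hand side is by definition a hyper-Tor group, $\hTor_k^R\big(N\Sym^2\Gamma P.(R/I),R/I\big)$; here one uses that $P.(R/I)$ is a complex of projectives concentrated in nonnegative degrees so that the total complex computes the hyperhomology. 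Now I would invoke the results of \cite{Ko1} on $H_\bullet N\Sym^2\Gamma P.(R/I)$: by Theorem 6.4 of \cite{Ko1} (the $d=2$, $l=2$ case quoted in the introduction) these homology modules are known explicitly --- $\Sym^2(R/I)\cong R/I$ in degree $0$, something like $I/I^2$ in degree $1$, $\Lambda^2(I/I^2)$ in degree $2$ --- and they are annihilated by $I$, hence are $R/I$-modules.

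The key step is then to feed this into a hyperhomology spectral sequence
\[
E^2_{p,q}=\Tor_p^R\big(H_qN\Sym^2\Gamma P.(R/I),\,R/I\big)\Longrightarrow
H_{p+q}\Tot\big(N\Sym^2\Gamma P.(R/I)\otimes P.(R/I)\big).
\]
Because each $H_qN\Sym^2\Gamma P.(R/I)$ is already an $R/I$-module and $I$ is locally generated by a regular sequence of length $2$, the groups $\Tor_p^R(R/I\text{-module},R/I)$ are computed by the Koszul resolution and satisfy $\Tor_p^R(M,R/I)\cong M\otimes_{R/I}\Lambda^p(I/I^2)$ for $p=0,1,2$ and vanish for $p\ge3$. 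So the $E^2$-page has at most three nonzero columns, and I expect the spectral sequence to degenerate --- either for degree reasons, or because one can check (as in \cite{Ko1}) that the relevant differentials vanish. Assembling the surviving terms $H_qN\Sym^2\Gamma P.(R/I)\otimes\Lambda^p(I/I^2)$ along each antidiagonal $p+q=k$, and then doubling, gives exactly the list in the statement: for instance $k=2$ receives $\Lambda^2(I/I^2)\otimes\Lambda^0\oplus I/I^2\otimes\Lambda^1(I/I^2)\oplus R/I\otimes\Lambda^2(I/I^2)$, but since $I/I^2\otimes I/I^2\supset\Lambda^2\oplus\Sym^2$ the bookkeeping must be done carefully, and after doubling one should recover four copies of $\Lambda^2(I/I^2)$ --- this matching of the middle degree is where I expect the arithmetic to be most delicate.

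The main obstacle is controlling the spectral sequence differentials and the extension problems: a priori $E^2_{2,0}$ and $E^2_{0,2}$ could interact, and the abutment could be a nonsplit extension rather than a direct sum. I would handle this by exploiting that everything in sight is an $R/I$-module killed by $I$, so all the $\Tor$'s are free $R/I$-modules of the stated ranks and the extensions of $R/I$-modules appearing are of $R/I$-modules over $R/I$ hence split; and I would argue the differentials vanish by comparing with the explicit chain-level description from \cite{Ko1}, exactly as the proof of Theorem \ref{third cross} used Theorem 5.1 of \cite{Ko1} as a black box. The vanishing for $k\ge5$ is then immediate since $H_qN\Sym^2\Gamma P.(R/I)=0$ for $q\ge3$ and $\Lambda^p(I/I^2)=0$ for $p\ge3$, so no antidiagonal with $p+q\ge5$ contributes. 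Finally I would note that the $A$-action compatibility needed downstream (for Theorem \ref{B's comparison theorem}) is automatic because every isomorphism above is induced by the linear functors $N,\Gamma,\Tot$ and the natural decompositions, which are visibly $R/I$-linear.
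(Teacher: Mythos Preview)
Your overall architecture is exactly the paper's: Lemma \ref{firststep}, then Eilenberg--Zilber to reduce to $\Tot$, then the hypertor spectral sequence $E^2_{p,q}=\Tor_p^R(H_q N\Sym^2\Gamma P.(R/I),R/I)$, then Theorem~6.4 of \cite{Ko1} for the input. But you have one concrete slip that derails the computation.

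The formula from \cite{Ko1} reads $H_1 N\Sym^2\Gamma P.(V)\cong \Lambda^2(V)\otimes I/I^2$, not $I/I^2$. For $V=R/I$ this gives $\Lambda^2(R/I)\otimes I/I^2=0$. So the $E^2$-page has only \emph{two} nonzero rows, $q=0$ (with entries $\Tor_p(R/I,R/I)\cong\Lambda^p(I/I^2)$) and $q=2$ (with entries $\Tor_p(\Lambda^2(I/I^2),R/I)\cong\Lambda^2(I/I^2)\otimes\Lambda^p(I/I^2)$); the row $q=1$ is identically zero. This is precisely what makes the argument go through cleanly: every $d_2$ differential $E^2_{p,q}\to E^2_{p-2,q+1}$ has either source or target in the zero row $q=1$ (or in $q\ge3$), so the sequence degenerates at $E^2$ for trivial reasons, with no need to ``compare with explicit chain-level descriptions''. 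Likewise the $k=2$ bookkeeping becomes immediate: the only contributions are $E^2_{2,0}=\Lambda^2(I/I^2)$ and $E^2_{0,2}=\Lambda^2(I/I^2)$, giving two copies per summand and four after doubling.

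With your value $H_1\cong I/I^2$ the picture breaks: for $k=2$ a single summand would contribute $\Lambda^2(I/I^2)\oplus\big(I/I^2\otimes I/I^2\big)\oplus\Lambda^2(I/I^2)$, which has $R/I$-rank $6$ rather than $2$, and after doubling you would not recover four copies of $\Lambda^2(I/I^2)$. Moreover a nonzero $q=1$ row would produce genuine $d_2$ differentials $E^2_{2,0}\to E^2_{0,1}$ and $E^2_{2,1}\to E^2_{0,2}$ that you would actually have to compute. So the ``delicate arithmetic'' and the ``main obstacle'' you anticipate are artifacts of the wrong $H_1$; once you correct it, both disappear.
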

\begin{proof}
Lemma \ref{firststep} tells us that
\begin{align*}
\cross_2(G_k)(V,W)\cong\,\,&
H_kN(\Sym^2\Gamma P.(V)\otimes \Gamma P.(W))\\
&\oplus H_kN(\Gamma P.(V)\otimes \Sym^2\Gamma P.(W))\text{.}
\end{align*}
Here $\Sym^2\Gamma P.(V)\otimes \Gamma P.(W)$ and $\Gamma P.(V)\otimes \Sym^2\Gamma P.(W)$
stand for simplicial modules whose $k^{\rm{th}}$ place is
$\Sym^2\Gamma P_k(V)\otimes \Gamma P_k(W)$ and $\Gamma P_k(V)\otimes \Sym^2\Gamma P_k(W)$
respectively;
however we may consider them to be the diagonal of the bi-simplicial modules whose $(k,l)^{\rm{th}}$ place is
$\Sym^2\Gamma P_k(V)\otimes \Gamma P_l(W)$ and $\Gamma P_k(V)\otimes \Sym^2\Gamma P_l(W)$ respectively,
therefore we can write
\begin{align*}
\cross_2(G_k)(V,W)\cong\,\,&
H_kN\Delta(\Sym^2\Gamma P.(V)\otimes \Gamma P.(W))\\
&\oplus H_kN\Delta(\Gamma P.(V)\otimes \Sym^2\Gamma P.(W))\text{.}
\end{align*}

We now start to calculate $H_kN\Delta(\Sym^2\Gamma P.(V)\otimes \Gamma P.(W))$,
later we will calculate $H_kN\Delta(\Gamma P.(V)\otimes \Sym^2\Gamma P.(W))$
and then add the two together to get an expression for $\cross_2(G_k)(V,W)$.

Using the Eilenberg-Zilber Theorem (Theorem \ref{iEZ}) we see that
\begin{align*}
H_kN\Delta(\Sym^2(\Gamma P.(V)\otimes\Gamma P.(W))
\cong\,\, & H_k\Tot(N\Sym^2\Gamma P.(V)\otimes N\Gamma P.(W))\\
\cong\,\, & H_k\Tot(N\Sym^2\Gamma P.(V)\otimes P.(W))\text{.}
\end{align*}
So we want to calculate \(H_k\Tot(N\Sym^2\Gamma P.(V)\otimes P.(W))\),
but this is just the definition of the hypertor
\(\hTor^R_i(N\Sym^2\Gamma P.(V),W)\).
Application 5.7.8 of \cite{Weibel}
gives us a spectral sequence to calculate hypertor
\[
^{II}E^2_{pq}=\Tor_p(H_q(A),B)\Rightarrow\hTor^R_{p+q}(A_*,B)\text.{}
\]
Theorem 6.4 of \cite{Ko1} tells us that
\[
H_kN\Sym^2\Gamma(P.(V))\cong
\begin{cases}
\Sym^2(V)                       & k=0\\
\Lambda^2(V)\otimes I/I^2           & k=1\\
D^2(V)\otimes\Lambda^2(I/I^2)   & k=2\\
0                               & k\ge3\text{.}\\
\end{cases}
\]
Now \(\Sym^2(R/I)\cong R/I, \Lambda^2(R/I)\cong 0\)  and \(D^2(R/I)\cong R/I\).
Hence the terms in the $2^{\rm{nd}}$ sheet of our spectral sequence are
$\Tor_p(R/I,R/I)$ in the zeroth column,
$\Tor_p(0,R/I)=0$ in the first column,
$\Tor_p(\Lambda(I/I^2),R/I)$ in the second column and $0$ everywhere else.
From from Example 5.2 of \cite{Ko1} (also see Theorem 5.1 of \cite{Ko1})
we know that \(\Tor_k(R/I,R/I)\cong\Lambda^k(I/I^2)\)
and hence have \(\Tor_k(V,W)\cong V\otimes W\otimes\Lambda^k(I/I^2)\).
So the $2^{\rm{nd}}$ sheet of
the spectral sequence looks like this:
\[
\xymatrix
{
\ddots& \vdots&\vdots&\vdots&\vdots\\
\ldots& 0&0&0&0\\
\ldots& 0&\Lambda^2(I/I^2)^{\otimes 2}  &0&\Lambda^2(I/I^2)\\
\ldots& 0&I/I^2\otimes\Lambda^2(I/I^2)  &0&I/I^2\\
\ldots& 0&\Lambda^2(I/I^2)              &0&R/I\text{.}\\
}
\]
The differentials on this level of the spectral sequence are
$-2$ in the $p$-direction and $+1$ in the $q$-direction,
so each differential either comes from or goes to a zero module.
Hence the differentials are all zero maps
i.e.\ the spectral sequence has already converged on the second level.

Therefore
\[
H_k N\Delta\big{(}\Sym^2\Gamma P.(R/I)\otimes \Gamma P.(R/I)\big{)}
\cong
\begin{cases}
R/I                                         & k=0\\
I/I^2                                       & k=1\\
\Lambda^2(I/I^2)\oplus\Lambda^2(I/I^2)      & k=2\\
I/I^2\otimes \Lambda^2(I/I^2)               & k=3\\
\Lambda^2(I/I^2)^{\otimes 2}                & k=4\\
0                                           & k\ge5\text{.}\\
\end{cases}\]
By symmetry
\[
H_kN\Delta\big{(}\Gamma P.(R/I)\otimes \Sym^2\Gamma P.(R/I)\big{)}
\cong
H_k N\Delta\big{(}\Sym^2\Gamma P.(R/I)\otimes \Gamma P.(R/I)\big{)}\text{.}
\]
Hence \[\cross_2(G_k)(R/I,R/I)
\cong
\begin{cases}
R/I\oplus R/I                                                                       & k=0\\
I/I^2\oplus I/I^2                                                                   & k=1\\
\Lambda^2(I/I^2)\oplus\Lambda^2(I/I^2)\oplus \Lambda^2(I/I^2)\oplus\Lambda^2(I/I^2) & k=2\\
I/I^2\otimes \Lambda^2(I/I^2)\oplus I/I^2\otimes \Lambda^2(I/I^2)                   & k=3\\
\Lambda^2(I/I^2)^{\otimes 2}\oplus \Lambda^2(I/I^2)^{\otimes 2}                     & k=4\\
0                                                                                   & k\ge5\text{.}\\
\end{cases}\]
\end{proof}

\section{The Cauchy Decomposition of $\Sym^3(P\otimes Q)$ and Calculating \(G_k(R/I)\)}

In this final section we complete our calculations of $G_k$,
but before we proceed we introduce
the Cauchy decomposition given in chapter III of \cite{ABW}
as it applies to the third symmetric power,
and remind the reader of Koszul and co-Koszul complexes.

Let $R$ be a ring. Let $P$ and $Q$ be finitely generated
projective $R$-modules. We now summarize the Cauchy decomposition
of $\Sym^3(P\otimes Q).$ This decomposition plays a central role
in Theorem \ref{first cross}, our calculation of \(G_k(R/I)\).

A three step filtration
\begin{align*}
0\subset
M_{(3)}(\Sym^3(P\otimes Q))\subset
M_{(2,1)}(&\Sym^3(P\otimes Q))
\subset
\Sym^3(P\otimes Q)
\end{align*}
is put on \(\Sym^3(P\otimes Q)\).
The \(M_{(3)}(\Sym^3(P\otimes Q))\) part is defined to be the image of the determinant
map
\begin{align*}
\Lambda^3 P\otimes \Lambda^3 Q \rightarrow& \Sym^3(P\otimes Q) \\
p_1\wedge p_2\wedge p_3 \otimes q_1\wedge q_2\wedge q_3 \mapsto&
\left| \begin{array}{ccc}
p_1\otimes q_1 & p_1\otimes q_2 & p_1\otimes q_3 \\
p_2\otimes q_1 & p_2\otimes q_2 & p_2\otimes q_3 \\
p_3\otimes q_1 & p_3\otimes q_2 & p_3\otimes q_3 \end{array} \right|
\end{align*}
(note this is simply isomorphic to \(\Lambda^3 P\otimes \Lambda^3 Q\)).
The \(M_{(2,1)}(\Sym^3(P\otimes Q))\) part is defined to be equal to the previous part
\(M_{(3)}(\Sym^3(P\otimes Q))\) part plus the image of the following homomorphism:
\begin{align*}
\Lambda^2 P\otimes P\otimes \Lambda^2 Q \otimes Q \rightarrow& \Sym^3(P\otimes Q) \\
p_1\wedge p_2\otimes p_3 \otimes q_1\wedge q_2\otimes q_3 \mapsto&
\left| \begin{array}{cc}
p_1\otimes q_1 & p_1\otimes q_2 \\
p_2\otimes q_1 & p_2\otimes q_2  \\\end{array} \right|
(p_3\otimes q_3)\text{.}
\end{align*}

The quotients of this filtration are given by the short exact sequences
\[
0\rightarrow \Lambda^3 P\otimes \Lambda^3 Q \rightarrow M_{(2,1)}(\Sym^3(P\otimes Q))
\rightarrow L^3_1P\otimes L^3_1Q \rightarrow 0
\]
and
\[
0\rightarrow M_{(2,1)}(\Sym^3(P\otimes Q)) \rightarrow \Sym^3(P\otimes Q)
\rightarrow
\Sym^3P\otimes\Sym^3Q \rightarrow 0\text{.}
\]

We now properly introduce the definition of Koszul complexes,
which we use for projective resolutions.

\begin{defn}\label{KdefofKos}
Let \(f:P\rightarrow Q\) be a homomorphism between two finitely generated projective
$R$-modules, and \(n\in\mathbb{N}\).
Let $\Kos^n(f)$ be the \emph{Koszul complex}
\[
0\rightarrow\Lambda^nP\overset{d_{n-1}}\rightarrow\Lambda^{n-1}P\otimes Q\overset{d_{n-2}}\rightarrow \ldots\overset{d_1}\rightarrow P\otimes\Sym^{n-1}Q\overset{d_0}\rightarrow \Sym^nQ\rightarrow 0
\]
where, for \(k\in\{0,1,\ldots,n-1\}\), the differential
\[d_k:\Lambda^{k+1}P\otimes\Sym^{n-k-1}Q\rightarrow
\Lambda^{k}P\otimes\Sym^{n-k}Q\]
acts by
\[
p_1\wedge\ldots\wedge p_{k+1}\otimes q_{k+2}...q_n\mapsto
\Sigma_{i=1}^{k+1}(-1)^{k+1-i}p_1\wedge\ldots\wedge \hat{p}_i\wedge\ldots\wedge p_{k+1}
\otimes f(p_i)q_{k+2}...q_n\text{.}
\]

Now let $f^*:Q^*\rightarrow P^*$ denote the dual map,
then the part of the Koszul complex \(\Kos^n(f^*)\) in the $k^{\rm{th}}$ degree
is \(\Lambda^kQ^*\otimes\Sym^{n-k}P^*\).
The dual of this chain complex is a co-chain complex with
the part in the $k^{\rm{th}}$ degree being
\(\big{(}\Lambda^kQ^*\otimes\Sym^{n-k}P^*\big{)}^*
\cong \Lambda^kQ\otimes D^{n-k}P\), i.e.\
\[
0\leftarrow\Lambda^nQ\leftarrow\Lambda^{n-1}Q\otimes P\leftarrow \ldots\leftarrow Q\otimes D^{n-1}P\leftarrow D^nP\leftarrow 0\text{.}
\]
We call this the \emph{co-Koszul complex} and denote it by
\(\tilde\Kos^n(f)\).
\end{defn}

\begin{remark}
It is well known that
the complexes \(\Kos(f)\) and \(\tilde\Kos(f)\) are exact if $f$ is an isomorphism.
\end{remark}

The following two propositions will be useful in the proof of
Theorem \ref{first cross}.
\begin{prop}\label{Kqi}
Let \(f:P\rightarrow Q\) be a homomorphism between two finitely
generated projective $R$-modules.  If we consider \(P\rightarrow
Q\) to be a chain complex concentrated in degrees $1$ and $0$ then
we have quasi-isomorphisms
\[\Kos^n(f)\cong N\Sym^n\Gamma(P\rightarrow Q)\]
and
\[\tilde{\Kos}^n(f)\cong N\Lambda^n\Gamma(P\rightarrow Q)\text{,}\]
where \(\Gamma\) and $N$ are the functors of the Dold-Kan correspondence.
\end{prop}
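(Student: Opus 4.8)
The plan is to compute both sides explicitly using the Dold-Kan machinery and then identify the resulting chain complexes up to quasi-isomorphism. First I would use the fact that the functor $\Gamma$ sends the two-term complex $(P\rightarrow Q)$, concentrated in degrees $1$ and $0$, to a simplicial module whose normalization recovers that complex; in low degrees one can write down $\Gamma(P\rightarrow Q)_n$ explicitly as a direct sum indexed by surjections in the simplex category, with $P$ contributing in simplicial degree $1$ and $Q$ in degree $0$. Applying $\Sym^n$ (respectively $\Lambda^n$) to this simplicial module degreewise, and then applying $N$, produces a chain complex in each degree $k$. The main computational task is to show that, after normalization, the degree-$k$ term is $\Lambda^kP\otimes\Sym^{n-k}Q$ (respectively $\Lambda^kQ\otimes D^{n-k}P$) and that the induced differential is exactly the Koszul differential $d_{k-1}$ of Definition \ref{KdefofKos}. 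This is essentially a bookkeeping exercise with the combinatorics of degeneracies: $\Sym^n$ of a sum distributes into monomials, the normalization kills the images of degeneracy maps, and what survives in degree $k$ is the part built from $k$ ``copies'' of the degree-$1$ summand $P$ and $n-k$ copies of the degree-$0$ summand $Q$, with the Koszul sign arising from the simplicial face maps.

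Alternatively — and this is probably the cleaner route to write up — I would invoke the known d\'ecalage/shift formulas for the derived functors of $\Sym^n$ and $\Lambda^n$ (as in \cite{DP}), or simply cite the classical computation that $L_\ast\Sym^n$ and $L_\ast\Lambda^n$ applied to a projective module placed in a single nonzero degree are governed by the Koszul and co-Koszul complexes. The statement $N\Sym^n\Gamma(P\rightarrow Q)\simeq\Kos^n(f)$ is the $n$-th symmetric analogue of the fundamental fact that $N\Lambda^\ast\Gamma$ of a one-term complex computes divided powers with a shift; here the two-term complex $(P\xrightarrow{f}Q)$ is the ``cone'' whose $\Sym^n$-derived functor is precisely the total complex assembling $\Lambda^kP\otimes\Sym^{n-k}Q$, i.e.\ $\Kos^n(f)$. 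For the exterior case, one applies $N\Lambda^n\Gamma$ to the same two-term complex and obtains the complex with terms $\Lambda^kQ\otimes D^{n-k}P$; this is $\tilde\Kos^n(f)$ by the very definition given in \ref{KdefofKos} as the dual of $\Kos^n(f^\ast)$.

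Concretely, the steps are: (1) fix notation for $\Gamma(P\rightarrow Q)$ and record that $N\Gamma(P\rightarrow Q)=(P\xrightarrow{f}Q)$; (2) apply $\Sym^n$ degreewise and use the natural decomposition of $\Sym^n$ of a direct sum together with the Eilenberg--Zilber/shuffle structure to identify $N\Sym^n\Gamma(P\rightarrow Q)$ in each degree; (3) check the differential matches $d_{k-1}$ by evaluating on generators, tracking the alternating signs coming from the simplicial identities; (4) conclude the first quasi-isomorphism, noting it is in fact an isomorphism of complexes once the normalization is carried out, hence certainly a quasi-isomorphism; (5) repeat verbatim with $\Lambda^n$ in place of $\Sym^n$, where the role of $\Sym$ and $D$ is interchanged by the standard duality between $\Lambda^n\Gamma$ on a degree-shifted complex and divided powers, and match the result against the definition of $\tilde\Kos^n(f)$.

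The main obstacle I anticipate is step (3): verifying that the differential produced by the alternating sum of face maps in $N\Sym^n\Gamma(P\rightarrow Q)$ agrees \emph{on the nose}, signs included, with the Koszul differential as written in Definition \ref{KdefofKos}. The degreewise identification of terms is routine, but the sign conventions in the Dold-Kan normalization, in the shuffle product on $\Sym^n$ of a simplicial module, and in the classical Koszul complex must all be reconciled, and a sign error here would be easy to make and annoying to detect. If a direct verification becomes unwieldy, the fallback is to cite the Dold--Puppe computation of $L_\ast\Sym^n$ and $L_\ast\Lambda^n$ of a free module in a single degree and extend to the two-term complex by the standard long-exact-sequence / spectral-sequence argument, which avoids hand-checking signs at the cost of a slightly less self-contained proof.
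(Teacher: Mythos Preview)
The paper's own proof is a bare citation to Proposition~2.4 and Remark~3.6 of \cite{Ko1}; it supplies no argument. Your proposal to prove the statement directly is therefore more ambitious than what the paper does, and the overall strategy --- compute $\Gamma(P\rightarrow Q)$, apply $\Sym^n$ or $\Lambda^n$ degreewise, normalize, compare --- is the natural one.

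There is, however, a genuine gap in steps~(2)--(4). You assert that after normalization the degree-$k$ term of $N\Sym^n\Gamma(P\rightarrow Q)$ is $\Lambda^kP\otimes\Sym^{n-k}Q$, so that one then merely has to match differentials. This is false: the normalized complex is strictly larger than $\Kos^n(f)$. Already for $P=Q=R$ and $n=2$ one has $\Gamma(C)_m\cong R^{m+1}$, hence $\Sym^2\Gamma(C)_m$ has rank $\binom{m+2}{2}$; solving $\binom{m+2}{2}=\sum_k\binom{m}{k}r_k$ for the ranks $r_k$ of the normalized complex gives $(r_0,r_1,r_2)=(1,2,1)$, whereas $\Kos^2(f)$ has ranks $(1,1,0)$. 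The two complexes are therefore not isomorphic, only quasi-isomorphic, and there is no ``on the nose'' differential to compare with the Koszul differential as you propose in step~(3). What is actually required is to \emph{construct} a chain map between $\Kos^n(f)$ and $N\Sym^n\Gamma(P\rightarrow Q)$ and then prove it is a quasi-isomorphism, for instance by filtering both sides or by a spectral-sequence comparison. That is essentially the content of the reference the paper cites, and it is also your stated fallback; but your primary plan, as written, cannot be carried out.
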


\begin{proof}
See Proposition 2.4 and Remark 3.6 of \cite{Ko1}.
\end{proof}

\begin{prop}\label{hTor}
Let $A.$ and $B.$ be bounded complexes of finitely generated projective $R$-modules.
Then we have a spectral sequence
\[
^{II}E^2_{p,q}=\bigoplus_{q=q'+q''}{\rm
Tor}_p(H_{q'}(A.),H_{q''}(B.)) \Rightarrow \hTor_{p+q}(A.,B.)
\]
where
$\hTor_n(A.,B.)$ is defined as $H_n\Tot(A.\otimes B.)$.  In particular
if $A.'$ and $B.'$ are further complexes as above and
\(A\rightarrow A'\) and \(B\rightarrow B'\) are quasi-isomorphisms then the induced morphism
\[
\Tot(A.\otimes B.)\rightarrow \Tot(A.'\otimes B.')
\]
is a quasi-isomorphism as well.
\end{prop}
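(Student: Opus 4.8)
The plan is to recognise the displayed spectral sequence as the standard two-complex hyper-Tor spectral sequence and then to deduce the ``in particular'' clause, which is the statement we actually need in Theorem~\ref{first cross}. The spectral sequence itself is the evident two-complex generalisation of the version $^{II}E^2_{pq}=\Tor_p(H_q(A),B)\Rightarrow\hTor_{p+q}(A,B)$ --- Application~5.7.8 of \cite{Weibel}, already used in the proof of Theorem~\ref{second cross} --- and it is produced in the same way from a Cartan--Eilenberg resolution; so I would only sketch that part and spend the real effort on the last assertion.

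For the spectral sequence, I would choose a Cartan--Eilenberg resolution $P_{\ast\ast}\to A.$ by projective modules, writing $P_{i,r}$ with $i$ the original homological degree and $r$ the resolution degree, chosen so that for each fixed $r$ the horizontal cycles, boundaries and homology are projective and, as $r$ varies, give projective resolutions of the cycles, boundaries and homology of $A.$. Since $A.$ and $B.$ are bounded, $\Tot(P_{\ast\ast})$ is a bounded-below complex of projectives and the triple complex $P_{\ast\ast}\otimes_R B.$ has finite diagonals, so every spectral sequence below converges. Since $B.$ is a bounded complex of flat modules, tensoring the quasi-isomorphism $\Tot(P_{\ast\ast})\to A.$ with $B.$ remains a quasi-isomorphism, whence $H_n\Tot(P_{\ast\ast}\otimes B.)\cong\hTor_n(A.,B.)$. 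I would then form the double complex obtained by totalising the $i$-degree with the $B$-degree, filter it by the resolution degree $r$, and compute its spectral sequence: on the $E^1$-page one takes the homology of each row $P_{\ast,r}\otimes B.$ in the $i$- and $B$-directions, and since $P_{\ast,r}$ is a bounded complex of projectives with projective horizontal cycles and homology the Künneth correction terms vanish, so this homology is $\bigoplus_{q=q'+q''}H_{q'}(P_{\ast,r})\otimes H_{q''}(B.)$; because the complex $r\mapsto H_{q'}(P_{\ast,r})$ is a projective resolution of $H_{q'}(A.)$, taking the remaining $d^1$-homology gives $^{II}E^2_{p,q}=\bigoplus_{q=q'+q''}\Tor_p(H_{q'}(A.),H_{q''}(B.))$, which abuts to $\hTor_{p+q}(A.,B.)$ by the previous sentence.

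For the ``in particular'' clause I would argue directly, factoring the map as $\Tot(A.\otimes B.)\to\Tot(A.'\otimes B.)\to\Tot(A.'\otimes B.')$: the first map tensors the quasi-isomorphism $A.\to A.'$ with the bounded complex of flat modules $B.$, and the second tensors $B.\to B.'$ with the bounded complex of flat modules $A.'$, and each is a quasi-isomorphism because in the spectral sequence of the filtration by one of the two degrees the $E^1$-page involves only the homology of one complex tensored with the (flat) terms of the other, hence is an isomorphism under the induced map. Composing gives the claim. Equivalently, one may invoke naturality of the hyper-Tor spectral sequence: a pair of quasi-isomorphisms induces $\Tor_p$ of isomorphisms on $^{II}E^2$, hence an isomorphism on $E^\infty$ and on the abutments.

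The only real difficulty here is bookkeeping: pinning down a convention for Cartan--Eilenberg resolutions under which ``horizontal cycles, boundaries and homology are projective'' holds literally, keeping the several gradings and their signs straight when regrading the multicomplex, and checking that the identifications of the $E^1$- and $E^2$-pages are the natural (functorial) ones, so that the naturality argument is legitimate. None of this is conceptually deep; alternatively one may simply quote the hyper-Tor spectral sequence from \S5.7 of \cite{Weibel} (it is symmetric in its two arguments, $\hTor$ being balanced) and carry out only the quasi-isomorphism argument above.
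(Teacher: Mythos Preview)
Your argument is correct. The spectral sequence you construct from a Cartan--Eilenberg resolution of $A.$ tensored with the bounded flat complex $B.$ is exactly the standard hyper-Tor spectral sequence, and your identification of the $E^1$- and $E^2$-pages is valid (the K\"unneth correction terms vanish because the horizontal homology of each row $P_{\ast,r}$ is projective by the definition of a Cartan--Eilenberg resolution). Your direct argument for the ``in particular'' clause --- factoring through $\Tot(A.'\otimes B.)$ and observing that tensoring a quasi-isomorphism with a bounded complex of flat modules is again a quasi-isomorphism --- is cleaner than invoking naturality of the whole spectral sequence, though both work.

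The paper itself does not prove this proposition at all: its proof consists solely of the citation ``See Application~5.7.8 of \cite{Weibel}''. So you have done considerably more than the authors, who simply quote the result. Your closing remark that one may ``simply quote the hyper-Tor spectral sequence from \S5.7 of \cite{Weibel}'' is in fact exactly what the paper does. What your fuller treatment buys is self-containment and an explicit verification of the quasi-isomorphism assertion, which is the part actually used in Theorem~\ref{first cross}; what the paper's approach buys is brevity, at the cost of sending the reader elsewhere.
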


\begin{proof}
See Application 5.7.8 of \cite{Weibel}.
\end{proof}

The following theorem shows that if $I$ is globally generated
by a regular sequence
then the derived functors of $\Sym^3$ evaluated on $R/I$
match the predictions.
\begin{thm}\label{first cross}
If $I$ is generated by a regular sequence of length $2$ then
the module \(G_k(R/I)\) is a free $R/I$-module of rank $1$,
for $k=0,2$ or $4$ and otherwise of rank $0$.
\end{thm}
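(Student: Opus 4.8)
The plan is to reduce the computation to two one-variable problems and then to exploit the Cauchy decomposition recalled above. Write $I=(a_1,a_2)$ for a generating regular sequence and let $K_i$ be the two-term complex $R\xrightarrow{a_i}R$ placed in degrees $1$ and $0$; since $(a_1,a_2)$ is regular, the Koszul complex $K_1\otimes K_2$ is a free resolution of $R/I$, so we may take $P.(R/I)=K_1\otimes K_2$. The first step is to replace the simplicial module $\Gamma(K_1\otimes K_2)$ by the pointwise tensor product $\Gamma K_1\otimes\Gamma K_2$: by the Eilenberg--Zilber Theorem (Theorem~\ref{iEZ}) the normalized complexes of both simplicial modules are chain homotopy equivalent to $K_1\otimes K_2$, and since $N$ and $\Gamma$ interchange chain homotopies with simplicial homotopies the two simplicial modules are simplicially homotopy equivalent; as $\Sym^3$ preserves simplicial homotopy we obtain $G_k(R/I)\cong H_kN\Sym^3(\Gamma K_1\otimes\Gamma K_2)$.

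In each simplicial degree $n$ the module $(\Gamma K_1\otimes\Gamma K_2)_n=\Gamma(K_1)_n\otimes\Gamma(K_2)_n$ is a tensor product of two finitely generated free modules, so the three-step Cauchy filtration recalled above applies functorially and equips the simplicial module $\Sym^3(\Gamma K_1\otimes\Gamma K_2)$ with a filtration whose successive quotients are the pointwise tensor products
\[
\Lambda^3\Gamma K_1\otimes\Lambda^3\Gamma K_2,\qquad L^3_1\Gamma K_1\otimes L^3_1\Gamma K_2,\qquad\Sym^3\Gamma K_1\otimes\Sym^3\Gamma K_2.
\]
Applying the exact functor $N$ produces a filtration of $N\Sym^3(\Gamma K_1\otimes\Gamma K_2)$ and hence a spectral sequence whose $E^1$-page is the direct sum of the homologies of these three graded complexes and which converges to $G_\ast(R/I)$.

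It remains to compute those three homologies. For $F\in\{\Sym^3,L^3_1,\Lambda^3\}$ the Eilenberg--Zilber Theorem together with Proposition~\ref{hTor} identifies the homology of the graded piece built from $F$ with $H_\ast\Tot(NF\Gamma K_1\otimes NF\Gamma K_2)$. A one-variable computation --- essentially the case $d=1$ of \cite{Ko1} --- shows that $NF\Gamma K_i$ is a bounded complex of finitely generated projective modules with homology $R/(a_i)$ concentrated in a single degree $e_F$, where $e_{\Sym^3}=0$, $e_{L^3_1}=1$ and $e_{\Lambda^3}=2$: for $\Sym^3$ and $\Lambda^3$ this comes directly from Proposition~\ref{Kqi}, the Koszul and co-Koszul complexes of $a_i$ collapsing to two-term complexes isomorphic to $R\xrightarrow{a_i}R$ because $\Lambda^{\ge2}$ of a rank-one free module vanishes, while for $L^3_1$ one feeds the defining short exact sequence $0\to L^3_1(V)\to V\otimes\Sym^2(V)\to\Sym^3(V)\to0$ of Definition~\ref{Schur def} into $\Gamma K_i$, applies $N$, and reads the statement off the resulting long exact homology sequence. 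Since $a_1$ is a non-zerodivisor, $K_1$ is a free resolution of $R/(a_1)$, so $NF\Gamma K_1$ and $K_1[e_F]$ are quasi-isomorphic (both being bounded complexes of finitely generated projectives with homology $R/(a_1)$ in degree $e_F$), and Proposition~\ref{hTor} gives
\[
\Tot(NF\Gamma K_1\otimes NF\Gamma K_2)\simeq\Tot\big(K_1\otimes NF\Gamma K_2\big)[e_F]\simeq\big(NF\Gamma K_2\otimes_R R/(a_1)\big)[e_F].
\]
Because $N$, $\Gamma$ and the Schur functor $F$ commute with the base change $R\to R/(a_1)$, the middle term equals $NF\Gamma\big(K_2\otimes_R R/(a_1)\big)[e_F]$, and $K_2\otimes_R R/(a_1)$ is the complex $R/(a_1)\xrightarrow{\bar{a}_2}R/(a_1)$, which is a free resolution of $R/I$ over $R/(a_1)$ since $\bar{a}_2$ is a non-zerodivisor there; applying the same one-variable computation over $R/(a_1)$ shows its $F$-derived functors give $R/I$ in degree $e_F$. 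Hence $H_\ast\Tot(NF\Gamma K_1\otimes NF\Gamma K_2)$ is $R/I$ concentrated in degree $2e_F$, so the three graded pieces contribute a free $R/I$-module of rank one in total degrees $0$, $2$ and $4$ respectively, and nothing else.

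Finally, the $E^1$-page of the filtration spectral sequence is concentrated in total degrees $0$, $2$ and $4$, while every differential lowers total degree by $1$; hence all differentials vanish and there is no extension problem, so $G_k(R/I)$ is a free $R/I$-module of rank $1$ for $k=0,2,4$ and zero otherwise. I expect the main obstacle to be the one-variable computation of $H_\ast NL^3_1\Gamma K_i$, together with keeping precise track of the homological degrees occurring in the Koszul and co-Koszul complexes, since $L^3_1$ is not directly covered by Proposition~\ref{Kqi} and this part of the argument must be routed through the defining exact sequences of the Schur functor.
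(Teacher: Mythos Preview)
Your argument is correct and follows essentially the same route as the paper: reduce via Eilenberg--Zilber to $\Sym^3(\Gamma K_1\otimes\Gamma K_2)$, apply the Cauchy filtration, compute the one-variable pieces $H_\ast NF\Gamma K_i$ for $F\in\{\Sym^3,L^3_1,\Lambda^3\}$ (using Proposition~\ref{Kqi} for the outer functors and the defining exact sequence for $L^3_1$), and then assemble. The only cosmetic differences are that the paper unwinds the three-step filtration as two successive long exact sequences rather than a filtration spectral sequence, and combines the two one-variable factors via the hypertor spectral sequence of Proposition~\ref{hTor} rather than your base-change-to-$R/(a_1)$ manoeuvre; both packagings yield the same concentrations in total degrees $0,2,4$ and hence the same conclusion.
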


\begin{proof}
Let $f,g$ be a regular sequence in $R$,
and let $I$ be generated by it.
Also we let $K.$ denote the complex
$\ldots\rightarrow 0 \rightarrow R \overset{f}\rightarrow R$ and
$L.$ denote the Koszul complex $\ldots\rightarrow 0 \rightarrow R \overset{g}\rightarrow R$.
We use the complex
$$\Kos^2( R \oplus R \overset{(f,g)}\rightarrow R)
\cong\Tot(K.\otimes L.)$$
as a resolution of $R/I$.

We see that:
\begin{align*}
G_k(R/I):=&H_kN\Sym^3\Gamma\Tot(K. \otimes L.) \\
\cong\,\,& H_k N \Sym^3 \Gamma \Tot(N\Gamma K.\otimes N\Gamma
L.)\text{.}
\end{align*}

Theorem \ref{iEZ} tells us that
\(\Tot(N\Gamma K.\otimes N\Gamma L.)\) is chain homotopic to \(N\Delta(\Gamma K. \otimes \Gamma L.)\).
Applying \(\Gamma\) turns the notion of chain homotopy into simplicial homotopy,
all functors preserve homotopy in the simplicial world
and \(N\) changes the notion of simplicial homotopy into the notion of chain homotopy.
So \(N\Sym^3\Gamma\) turns the chain homotopy between
\(\Tot(N\Gamma K.\otimes N\Gamma L.)\) and \(N\Delta(\Gamma K. \otimes \Gamma L.)\)
into a chain homotopy between \(N\Sym^3\Gamma\Tot(N\Gamma K.\otimes N\Gamma L.)\) and
\(N\Sym^3\Gamma N\Delta(\Gamma K. \otimes \Gamma L.)\).
Chain homotopic complexes are quasi-isomorphic,
so continuing our calculation of \(G_k(R/I)\) where we left off we get:
\begin{align*}
G_k(R/I)&\cong H_k N\Sym^3\Gamma N\Delta( \Gamma K.\otimes \Gamma L.)
\cong H_k N\Sym^3\Delta(\Gamma K.\otimes \Gamma L.)\\
&\cong H_k N\Delta\Sym^3(\Gamma K.\otimes \Gamma L.)\text{.}
\end{align*}

Now we calculate \(G_k(R/I)\) by calculating \(H_k N\Delta\Sym^3(\Gamma K.\otimes \Gamma L.)\).
We cannot calculate this directly, so instead we employ
the short exact sequences that come from the Cauchy decomposition;
these short exact sequences
will allow us to get information about the homologies of
\(N\Delta\Sym^3(\Gamma K.\otimes \Gamma L.)\) from easier to calculate homologies.

From the Cauchy decomposition we get the following short exact sequences of bisimplicial modules:
\[
0\rightarrow \Lambda^3 \Gamma K.\otimes \Lambda^3 \Gamma L. \rightarrow M_{(2,1)}(\Sym^3(\Gamma K.\otimes \Gamma L.))
\rightarrow L^3_1 \Gamma K.\otimes L^3_1 \Gamma L. \rightarrow 0
\]
\begin{align*}
0
\rightarrow
M_{(2,1)}(\Sym^3(\Gamma K.\otimes \Gamma L.))
&\rightarrow
\Sym^3(\Gamma K. \otimes \Gamma L.)\\
&\rightarrow
\Sym^3\Gamma K.\otimes\Sym^3\Gamma L.
\rightarrow 0\text{.}
\end{align*}
Applying \(N\Delta\) to this gives us a short exact sequence of chain complexes.
We can turn the homologies of these into two long exact sequences,
this will allow us to get information about the homologies of \(M_{(2,1)}(\Sym^3(\Gamma K.\otimes \Gamma L.))\)
from the easier to calculate homologies of \(\Lambda^3 \Gamma K.\otimes \Lambda^3 \Gamma L.\)
and \(L^3_1\Gamma K.\otimes L^3_1\Gamma L.\).
This information about the homologies of \(M_{(2,1)}(\Sym^3(\Gamma K.\otimes \Gamma L.))\)
together with the homologies of the easier to calculate homologies of \(\Sym^3\Gamma K.\otimes\Sym^3\Gamma L.\)
will tell us the ranks of the homologies of \(\Sym^3(\Gamma K.\otimes \Gamma L.)\).

First we calculate the homologies of \(L^3_1\Gamma K.\)
The definition of \(L^3_1\) gives us the following short exact sequence
for any finitely generated projective module $P$
\[
0\rightarrow L^3_1P \rightarrow P\otimes \Sym^2 P \rightarrow \Sym^3 P \rightarrow 0\text{,}
\]
which gives us the short exact sequence of simplicial complexes
\[
0\rightarrow L^3_1\Gamma K. \rightarrow \Gamma K. \otimes \Sym^2 \Gamma K. \rightarrow \Sym^3 \Gamma K. \rightarrow 0\text{,}
\]
the middle term of this short exact sequence is the
simplicial complex whose $k^{\rm{th}}$ term is \(\Gamma K_k \otimes \Sym^2 \Gamma K_k\).
We think of this middle term instead as the diagonal of a bisimplicial complex
whose \((k,l)^{\rm{th}}\) term is \(\Gamma K_k \otimes \Sym^2 \Gamma K_l\).
Now applying the functor $N$ turns this into a short exact sequence of chain complexes
\[
0\rightarrow NL^3_1\Gamma K. \rightarrow N\Delta(\Gamma K. \otimes \Sym^2 \Gamma K.) \rightarrow N\Sym^3 \Gamma K. \rightarrow 0\text{,}
\]
and from this we can create a long exact sequence that gives us information about
\(H_kN L^3_1 \Gamma K.\).

Applying the Eilenberg-Zilber Theorem, then Propositions \ref{Kqi} and \ref{hTor}
we see that:
\begin{align*}
H_kN\Delta(\Gamma K.\otimes \Sym^2\Gamma K.)\cong\,\,& H_k\Tot(N\Gamma K.\otimes N\Sym^2\Gamma K.)\\
&\cong H_k\Tot(K.\otimes \Kos^2(f))\text{.}
\end{align*}
Now
\begin{align*}
\Kos^2(f)
&=(\Lambda^2(R)\rightarrow
\Lambda^1(R)\otimes\Sym^1(R)
\rightarrow\Sym^2(R))\\
&=(0\rightarrow R\overset{f}\rightarrow R)=K.\text{,}
\end{align*}
and therefore
\begin{align*}
H_kN\Delta(\Gamma K.\otimes \Sym^2\Gamma K.)\cong\,\,&
H_k\Tot(K.\otimes K.)=H_k(P.(R/(f))^{\otimes 2})
\\=\,\,& \Lambda^k((f)/(f)^2)\cong
\begin{cases}
R/(f)   &   k=0,1\\
0       &   k>1
\end{cases}
\end{align*}
with the last step given by the isomorphism
\(H_k(P.(R/J)^{\otimes m})\cong\Lambda^k((J/J^2)^{m-1}),\)
see Theorem 5.1 of \cite{Ko1}.

Using Proposition \ref{Kqi} we get
\begin{align*}
H_kN\Sym^3 \Gamma K. \cong H_k(\Kos^3(f))
\end{align*}
now
\begin{align*}
\Kos^3(f)
&=(\Lambda^3(R)\rightarrow
\Lambda^2(R)\otimes R\rightarrow
\Lambda^1(R)\otimes\Sym^2(R)
\rightarrow\Sym^3(R))\\
&=(0\rightarrow 0\rightarrow R\overset{f}\rightarrow R)=K.\text{,}
\end{align*}
hence
\begin{align*}
H_kN\Sym^3 \Gamma K. \cong
\begin{cases}
R/(f)   & k=0 \\
0       & k\ne0\text{.}
\end{cases}
\end{align*}

So the long exact sequence of homologies that we get from the short exact sequence
\[
0\rightarrow NL^3_1\Gamma K. \rightarrow N\Delta(\Gamma K. \otimes \Sym^2 \Gamma K.) \rightarrow N\Sym^3 \Gamma K. \rightarrow 0
\]
is
\[
\xymatrix
{
\ar@{.>}[r]& H_3NL^3_1\Gamma K.\ar[r]       &0\ar[r]                &0\ar `[r]  `[l]  `[llld] `[r] [dll]    &\\
& H_2NL^3_1\Gamma K.\ar[r]  &0\ar[r]                &0\ar `[r]  `[l]  `[llld] `[r] [dll]    &\\
& H_1NL^3_1\Gamma K.\ar[r]  &R/(f)\ar[r]    &0\ar `[r]  `[l]  `[llld] `[r] [dll]    &\\
& H_0NL^3_1\Gamma K.\ar[r]  &R/(f)\ar[r]    &R/(f)\ar[r]        & 0\\
}
\]
and so we get
\[
H_kNL^3_1\Gamma K.\cong
\begin{cases}
0 & k\ne 1\\
R/(f) & k=1\text.
\end{cases}
\]
Similarly we get
\[
H_kNL^3_1\Gamma L.\cong
\begin{cases}
0 & k\ne 1\\
R/(g) & k=1\text.
\end{cases}
\]

Now we work with the short exact sequence of simplicial modules
\[
0\rightarrow \Lambda^3 \Gamma K.\otimes \Lambda^3 \Gamma L. \rightarrow M_{(2,1)}(\Sym^3(\Gamma K.\otimes \Gamma L.))
\rightarrow L^3_1\Gamma K.\otimes L^3_1\Gamma L. \rightarrow 0\text{.}
\]
As above, we rewrite the left and right hand side of this exact sequence
using $\Delta$ and we apply the functor $N$ to get
the following short exact sequence of chain complexes
\begin{align*}
0\rightarrow
N\Delta(\Lambda^3 \Gamma K.\otimes \Lambda^3 \Gamma L.)
&\rightarrow
N M_{(2,1)}(\Sym^3(\Gamma K.\otimes \Gamma L.))\\
&\rightarrow
N\Delta(L^3_1\Gamma K.\otimes L^3_1\Gamma L.)
\rightarrow 0.
\end{align*}
The Eilenberg-Zilber Theorem tells us that
\[H_kN\Delta(\Lambda^3 \Gamma K.\otimes \Lambda^3 \Gamma L.) \cong H_k\Tot(N\Lambda^3 \Gamma K.\otimes N\Lambda^3 \Gamma L.)\]
and
\[H_kN\Delta(L^3_1\Gamma K.\otimes L^3_1\Gamma L.)\cong
H_k\Tot (NL^3_1\Gamma K.\otimes NL^3_1\Gamma L.)\text{.}\]
Now by Propositions \ref{Kqi} and \ref{hTor}
we get
\begin{align*}
H_k\Tot(N\Lambda^3 \Gamma K.\otimes N\Lambda^3 \Gamma L.)
\cong\,\,& H_k\Tot(\tilde\Kos^3(f)\otimes \tilde\Kos^3(g))\text{.}
\end{align*}
Now
\begin{align*}
\tilde\Kos^3(f)
&=(D^3(R)\rightarrow
D^2(R)\otimes R\rightarrow
R\otimes\Lambda^2(R)\rightarrow
\Lambda^3(R))
\\
&=(R\overset{f}\rightarrow R\rightarrow 0\rightarrow 0)=K.[-2]\text{,}
\end{align*}
and similarly \(\tilde\Kos^3(g)=L.[-2]\). So
\begin{align*}
H_k\Tot(N\Lambda^3 \Gamma K.\otimes N\Lambda^3 \Gamma L.)
\cong\,\,& H_k\Tot(K.[-2]\otimes L.[-2])\\
\cong\,\,& H_k(\Tot(K.\otimes L.)[-4])\\
\cong\,\,& H_{k-4}(P.(R/I))\cong
\begin{cases}
0 & k\ne 4\\
R/I & k=4\text{.}
\end{cases}
\end{align*}

Now to calculate \(H_k\Tot (NL^3_1\Gamma K.\otimes NL^3_1\Gamma L.)\)
we note that it is the hypertor functor \(\hTor_k(NL^3_1\Gamma K.,NL^3_1\Gamma L.)\)
and use the hypertor spectral sequence of Proposition \ref{hTor}
taking \(A.=NL^3_1\Gamma K.\) and
\(B.=NL^3_1\Gamma L.\).
But since
\(H_kNL^3_1\Gamma K.\) and
\(H_kNL^3_1\Gamma L.\) are $0$ unless $k=1$ (see above)
this spectral sequence collapses,
with the only (potentially) non-zero terms being when \(q'=q''=1,\) i.e.\ when $q=2$.
These (potentially) non-zero terms are \(\Tor_p(R/(f),R/(g))\).
Taking \(K.\) as a projective resolution of \(R/(f)\) then tensoring throughout by $R/(g)$
we get the chain complex
\begin{align*}
\big{(}0\rightarrow R\otimes R/(g) \overset{f}\rightarrow R\otimes R/(g)\big{)}
=\big{(}0\rightarrow R/(g) \overset{f}\rightarrow R/(g)\big{)}
\end{align*}
which has homology $R/I$ at the $0^{\rm{th}}$ place and $0$ everywhere else.
And so
\begin{align*}
H_k\big{(}\Tot (NL^3_1\Gamma K.\otimes NL^3_1\Gamma L.)\big{)}\cong \Tor_{k-2}(R/(f),R/(g))
\cong
\begin{cases}
0 & k\ne 2\\
R/I & k=2\text{.}
\end{cases}
\end{align*}

So the short exact sequence of chain complexes
\begin{align*}
0\rightarrow
N\Delta(\Lambda^3 \Gamma K.\otimes \Lambda^3 \Gamma L.)
&\rightarrow
N M_{(2,1)}(\Sym^3(\Gamma K.\otimes \Gamma L.))\\
&\rightarrow
N\Delta(L^3_1\Gamma K.\otimes L^3_1\Gamma L.) \rightarrow 0
\end{align*}
gives rise to the following long exact sequence of homologies
\[
\xymatrix
{
\ar@{.>}[r]& 0\ar[r]    &H_5N M_{(2,1)}(\Sym^3(\Gamma K.\otimes \Gamma L.))\ar[r]   &0\ar `[r]  `[l]  `[llld] `[r] [dll]    &\\
& R/I\ar[r]             &H_4N M_{(2,1)}(\Sym^3(\Gamma K.\otimes \Gamma L.))\ar[r]   &0\ar `[r]  `[l]  `[llld] `[r] [dll]    & \\
& 0\ar[r]               &H_3N M_{(2,1)}(\Sym^3(\Gamma K.\otimes \Gamma L.))\ar[r]   &0\ar `[r]  `[l]  `[llld] `[r] [dll]    &\\
& 0\ar[r]               &H_2N M_{(2,1)}(\Sym^3(\Gamma K.\otimes \Gamma L.))\ar[r]   &R/I\ar `[r]    `[l]  `[llld] `[r] [dll]    &\\
& 0\ar[r]               &H_1N M_{(2,1)}(\Sym^3(\Gamma K.\otimes \Gamma L.))\ar[r]   &0\ar `[r]  `[l]  `[llld] `[r] [dll]    &\\
& 0\ar[r]               &H_0N M_{(2,1)}(\Sym^3(\Gamma K.\otimes \Gamma L.))\ar[r]   &0\ar[r]        & 0\\
}
\]
and therefore we get
\begin{align*}
H_kN M_{(2,1)}(\Sym^3(\Gamma K.\otimes \Gamma L.))
\cong
\begin{cases}
R/I & k=2,4\\
0   & \text{otherwise.}
\end{cases}
\end{align*}

Now we work with the short exact sequence of simplicial modules
\begin{align*}
0\rightarrow M_{(2,1)}(\Sym^3(\Gamma K.\otimes \Gamma L.))
&\rightarrow \Sym^3(\Gamma K. \otimes \Gamma L.)\\
&\rightarrow
\Sym^3\Gamma K.\otimes\Sym^3\Gamma L. \rightarrow 0
\end{align*}
the term \(\Sym^3\Gamma K.\otimes\Sym^3\Gamma L.\) is a simplicial module whose $k^{\rm{th}}$
place is \(\Sym^3\Gamma K_k\otimes\Sym^3\Gamma L_k\), but as above
it is more useful to think of it as
the diagonal of the bisimplicial complex whose $(k,l)^{\rm{th}}$ place is
\(\Sym^3\Gamma K_k\otimes\Sym^3\Gamma L_l\).
Applying the functor $N$ we get the following
short exact sequence of chain complexes
\begin{align*}
0\rightarrow NM_{(2,1)}(\Sym^3(\Gamma K.\otimes \Gamma L.))
&\rightarrow N\Sym^3(\Gamma K. \otimes \Gamma L.)\\
&\rightarrow
N\Delta(\Sym^3\Gamma K.\otimes\Sym^3\Gamma L.) \rightarrow 0\text{.}
\end{align*}
Applying the Eilenberg-Zilber Theorem and Propositions \ref{Kqi}
and \ref{hTor} we see
\begin{align*}
H_kN\Delta(\Sym^3\Gamma K.\otimes\Sym^3\Gamma L.)\cong\,\,&
H_k\Tot(N\Sym^3\Gamma K.\otimes N\Sym^3\Gamma L.)\\
\cong\,\,& H_k\Tot(\Kos^3(f)\otimes \Kos^3(g))\text{.}
\end{align*}
Earlier in this proof we showed that \(\Kos^3(f)=K.\) and similarly \(\Kos^3(g)=L.\), so
\begin{align*}
H_k\Tot(\Kos^3(f)\otimes \Kos^3(g))\cong
H_k(\Tot(K.\otimes L.))=H_k(P.(R/I))\text{.}
\end{align*}
Hence
\[H_kN\Delta(\Sym^3\Gamma K.\otimes\Sym^3\Gamma L.)\cong
\begin{cases}
R/I & k=0\\
0 & k\ne 0\text{.}
\end{cases}
\]

And so the short exact sequence of chain complexes
\begin{align*}
0\rightarrow NM_{(2,1)}(\Sym^3(\Gamma K.\otimes \Gamma L.))
&\rightarrow N\Sym^3(\Gamma K. \otimes \Gamma L.)\\
&\rightarrow
N\Delta(\Sym^3(\Gamma K.)\otimes\Sym^3(\Gamma L.)) \rightarrow 0\text{,}
\end{align*}
gives rise to the following long exact sequence
\[
\xymatrix
{
\ar@{.>}[r]& 0\ar[r]    &H_5N\Delta\Sym^3(\Gamma K. \otimes\Gamma L.)\ar[r]     &0\ar `[r]  `[l]  `[llld] `[r] [dll]    &\\
& R/I\ar[r]             &H_4N\Delta\Sym^3(\Gamma K. \otimes\Gamma L.)\ar[r]     &0\ar `[r]  `[l]  `[llld] `[r] [dll]    & \\
& 0\ar[r]               &H_3N\Delta\Sym^3(\Gamma K. \otimes\Gamma L.)\ar[r]     &0\ar `[r]  `[l]  `[llld] `[r] [dll]    &\\
& R/I\ar[r]             &H_2N\Delta\Sym^3(\Gamma K. \otimes\Gamma L.)\ar[r] &0\ar `[r]  `[l]  `[llld] `[r] [dll]    &\\
& 0\ar[r]               &H_1N\Delta\Sym^3(\Gamma K. \otimes\Gamma L.)\ar[r] &0\ar `[r]  `[l]  `[llld] `[r] [dll]    &\\
& 0\ar[r]               &H_0N\Delta\Sym^3(\Gamma K. \otimes\Gamma L.)\ar[r] &R/I\ar[r]      & 0\text{.}\\
}
\]

And hence (as we know \(G_k(R/I)\cong
H_k N\Delta\Sym^3(\Gamma K.\otimes \Gamma L.)\))
we see that
\[G_k(R/I)\cong
\begin{cases}
R/I             & k=0,2,4\\
0               & \text{otherwise,}
\end{cases}
\]
as desired.
\end{proof}

\end{document}